\def \Nats {\mathds{N}}
\def \Ints {\mathds{Z}}
\def \restrictedto {\upharpoonright}
\DeclareMathOperator{\cl}{cl}
\newcommand{\otn}[2]{#1_{1},\dots ,#1_{#2}}
\newcommand{\set}[1]{\{#1\}}
\newcommand{\setarg}[2]{{\{#1\ |\ #2\}}}
\newcommand{\setcolon}[2]{\{#1 : #2\}}
\def \strong {\leqslant}
\DeclareMathOperator{\geometry}{G}
\newtheorem{lemma}{Lemma}[section]
\newtheorem{theorem}[lemma]{Theorem}
\newtheorem{prop}[lemma]{Proposition}
\theoremstyle{definition}
\newtheorem{definition}[lemma]{Definition}
\newtheorem{fact}[lemma]{Fact}
\newtheorem{observation}[lemma]{Observation}
\newtheorem*{notation*}{Notation}
\newtheorem{remark}[lemma]{Remark}
\def \dm {d}
\DeclareMathOperator{\bigM}{M}
\DeclareMathOperator{\clq}{clq}
\DeclareMathOperator{\geo}{geo}
\DeclareMathOperator{\sym}{sym}
\newcommand{\pg}[1]{\mathcal{#1}}
\title{An ab initio construction of a geometry}
\author{Omer Mermelstein}
\date{\today}
\address{Department of Mathematics, Ben Gurion University of the Negev \\ P.O.B 653, Be'er Sheva 8410501, Israel}
\email{omermerm@math.bgu.ac.il}
\subjclass{Primary 03C30,03C45; Secondary 03C13}
\keywords{Hrushovski construction, Predimension}
\begin{document}
\begin{abstract}
We show that the geometry of Hrushovski's ab initio construction for a single $n$-ary relation not-permitting dependent sets of size less than $n$, when restricted to $n$-tuples, can be itself constructed as a Hrushovski construction.
\end{abstract}
\maketitle

\section{Introduction}
In the early 1990s Hrushovski \cite{Hns,HrushovskiSecond} constructed his celebrated counter examples to Zilber's Conjecture -- a suggested classification of strongly minimal sets according to their (forking) geometry. The construction of \cite{Hns} and its variants, where this method is applied to the class of finite hypergraphs with hereditarily less edges than vertices, were dubbed the Hrushovski \emph{ab initio} constructions. The geometries associated to the ab initio construction are all \emph{flat}, a property prohibiting the any algebraic structure.

In \cite{EvansMatroid} Evans surveys the connections between Matroids and Hrushovski constructions. Works of Evans and Ferreira \cite{DavidMarcoOne,DavidMarcoTwo} explore how variation in the amalgamation class affect the geometry (matroid) associated to the resulting ab initio construction. The paper \cite{DavidMarcoOne} closes with the question of whether the geometry associated to a Hrushovski construction is a generic construction itself.

This paper answers Evans and Ferreira's question in the positive, for a subclass of the Hrushovski constructions -- the constructions given by a single $n$-ary relation and not admitting dependent sets of size less than $n$. Moreover, we show that, the geometry is essentially, like the ab initio construction itself, a generic construction given by a predimension function. We achieve this by using the methods of \cite{Mermelstein2016} in conjunction with the predimension function proposed in \cite{HassonMermelstein2017}.

\section{Preliminaries}
\subsection{Generic structures}
A category whose objects form a class of finite (relational) structures $\mathbb{A}$, closed under isomorphisms and substructures, and whose morphisms, $\strong$, are (not necessarily all) embeddings, is an \emph{amalgamation class} (or has the \emph{Amalgamation property} and \emph{Joint Embedding property}) if:
\begin{itemize}
\item [(AP)]
If $A,B_1,B_2\in \mathbb{A}$ are such that $A\strong B_1,B_2$, then there exists some $D\in\mathbb{A}$ and embeddings $f_i:B_i \to D$ such that $f_i[B_i]\strong D$ and $f_1\restrictedto A = f_2\restrictedto A$.	
\item [(JEP)]
If $A_1,A_2\in \mathbb{A}$, then there exists some $B\in\mathbb{A}$ and embeddings $f_i:A_i\to B$ such that $f_i[A_i]\strong B$ for $i=1,2$.
\end{itemize}
By Fra\"iss\'e's Theorem, to every amalgamation class is associated a unique (up to isomorphism) countable structure $\mathbb{M}$ satisfying
\begin{enumerate}
\item
Every finite substructure of $\mathbb{M}$ is an element of $\mathbb{A}$.
\item
Whenever $A\strong \mathbb{M}$ and $A\strong D\in\mathbb{A}$, there is an embedding $f:D\to \mathbb{M}$ fixing $A$ pointwise such that $f[D]\strong\mathbb{M}$.
\end{enumerate}
We call $\mathbb{M}$ a generic structure for $\mathbb{A}$.

\subsection{Geometries}
We interchangeably consider a geometry (a finitary matroid whose every element is closed) on a set $X$ as:
\begin{enumerate}
\item
A closure operator $\cl:P(X)\to P(X)$;
\item
A dimension function $d:P(X)\to \Nats\cup{\infty}$;
\item
A first order structure with relations $\setarg{I_n}{n\in \Nats}$ where $I_n\subseteq X^n$ is the set of independent $n$-tuples.
\item
A first order structure with relations $\setarg{D_n}{n\in \Nats}$ where $D_n\subseteq X^n$ is the set of dependent $n$-tuples.
\end{enumerate}

\begin{notation*}
For a geometry $\pg{G}$, we denote its closure operator $\cl_{\pg{G}}$ and its dimension function $\dm_{\pg{G}}$.
\end{notation*}

\begin{definition}
Say that a geometry $\pg{G}$ is \emph{flat} if whenever $E_1,\dots,E_n$ are closed in $\pg{G}$, then
\[
\sum_{s\subseteq \set{1,\dots,k}} (-1)^{|s|}d(E_s) \leq 0 
\]
where $E_{\emptyset}=\cl(\bigcup_{i=1}^n E_n)$ and $E_s = \bigcap_{i\in s}E_i$ for every $\emptyset\neq s\subseteq \set{1,\dots,n}$.
\end{definition}

\begin{definition}
We say that a geometry $\pg{G}$ is $n$-pure if $n$ is the maximal natural number such that every $n$-tuple of $\pg{G}$ is independent.
\end{definition}

\subsection{Clique predimension}

\newcommand{\lang}[1]{\mathcal{L}_{#1}}
\newcommand{\maxcliques}[1]{\bigM(#1)}
\newcommand{\Cclq}{\mathcal{C}^{\clq}}
\newcommand{\Mclq}{\mathbb{M}^{\clq}}
\newcommand{\cardstar}[1]{|#1|_*}
\newcommand{\predim}{\delta_s}

\newcommand{\Cgeo}{\mathcal{C}^{\geo}}
\newcommand{\Mgeo}{\mathbb{M}^{\geo}}
\newcommand{\Csym}{\mathcal{C}^{\sym}}
\newcommand{\Msym}{\mathbb{M}^{\sym}}

Fix a symmetric irreflexive $n$-ary relation $S$.

\begin{definition}
Let $A$ be some finite $\set{S}$-structure. We say that $K\subseteq A$ with $|K|\geq n$ is a \emph{clique} in $A$ if $[K]^n \subseteq S^A$. We say that $K$ is a \emph{maximal} clique in $A$ if there is no clique $K'\subseteq A$ such that $K'\supset K$. Define $\maxcliques{A}$ to be the set of maximal cliques of $A$.
\end{definition}

\begin{notation*}
For a finite set $X$ define $\cardstar{X} = \max\set{0,|X|-(n-1)}$
\end{notation*}

\begin{definition}
For every finite $\set{S}$-structure $A$ define
\begin{gather*}
s(A) = \sum_{K\in\maxcliques{A}} \cardstar{K}
\\
\predim(A) = |A| - s(A)
\end{gather*}
\end{definition}

For a finite $\set{S}$-structure $A$ and a finite substructure $B\subseteq A$ define $\predim(A/B) = \predim(A) - \predim(B)$. Extend this definition to an infinite $\set{S}$-structure $A$ and a substructure $B$ by defining $\predim(A/B) = \inf\setcolon{\predim(X/X\cap B)}{X\subseteq A, |X|<\infty}$. Write $B\strong A$ if for every $B\subseteq X\subseteq A$, it is the case that $\predim(X/B)\geq 0$.

Let $A$ be some $\set{S}$-structure such that $\set{a}\strong A$ for every $a\in A$. For every finite substructure $X\subseteq A$, define $\dm_{\geometry(A)}(X) = \inf\setarg{\predim(Y)}{X\subseteq Y\subseteq A}$. The function $\dm_{\geometry(A)}$ uniquely determines the dimension function of a flat geometry $\geometry(A)$ with the same universe as $A$. We call $\geometry(A)$ the \emph{geometry associated with} $A$.

\subsection{The ab initio clique and symmetric constructions}

The following definitions and facts are taken from \cite{HassonMermelstein2017}.

\begin{definition}
Define $\Cclq_0$ to be the class of $\set{S}$-structures $A$ such that whenever $K_1,K_2\in\maxcliques{A}$ are distinct, then $|K_1\cap K_2| < n$.

Define $\Cclq$ to be the class of $\set{S}$-structures $A\in\Cclq_0$ such that $\set{a}\strong A$ for every $a\in A$.

Define $\Csym$ to be the class of $\set{S}$-structures $A\in\Cclq$ such that $\maxcliques{A}\subseteq [A]^n$.
\end{definition}

\begin{fact}\*
\begin{itemize}
\item
The function $\predim:\Cclq_0\to \Ints$ is submodular. That is, letting $D\in\Cclq_0$ and letting $A,B,A\cup B,A\cap B\subseteq D$ be induced substructures, we have
\[
\predim(A\cup B) + \predim(A\cap B) \leq \predim(A) + \predim(B).
\]
\item
The relation $\strong$ is transitive for structures in $\Cclq_0$.
\end{itemize}
\end{fact}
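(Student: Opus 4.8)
The plan is to establish the two items in order, deriving transitivity from submodularity by the standard argument.

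For submodularity, fix $D\in\Cclq_0$ with induced substructures $A,B,A\cup B,A\cap B\subseteq D$. Since $|A\cup B|+|A\cap B|=|A|+|B|$ for finite sets, the desired inequality $\predim(A\cup B)+\predim(A\cap B)\le\predim(A)+\predim(B)$ is equivalent to the \emph{supermodularity of $s$}, namely $s(A)+s(B)\le s(A\cup B)+s(A\cap B)$. The crucial structural input is the defining property of $\Cclq_0$: distinct maximal cliques of $D$ meet in fewer than $n$ points, so every $n$-subset of $D$ lying in $S^D$ is contained in a \emph{unique} maximal clique of $D$. From this I would show that for each induced $C\subseteq D$ the maximal cliques of $C$ are precisely the traces $K\cap C$ with $K\in\maxcliques{D}$ and $|K\cap C|\ge n$, and that $K\mapsto K\cap C$ is a bijection from $\{K\in\maxcliques{D}:|K\cap C|\ge n\}$ onto $\maxcliques{C}$ — both injectivity and surjectivity reduce to the uniqueness just noted. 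Since $\cardstar{K\cap C}=0$ whenever $|K\cap C|<n$, this yields
\[
s(C)=\sum_{K\in\maxcliques{D}}\cardstar{K\cap C}.
\]
Substituting $C=A,B,A\cup B,A\cap B$ and using $K\cap(A\cup B)=(K\cap A)\cup(K\cap B)$ and $K\cap(A\cap B)=(K\cap A)\cap(K\cap B)$, supermodularity of $s$ reduces, summand by summand over $K\in\maxcliques{D}$, to the purely set-theoretic inequality $\cardstar{P}+\cardstar{Q}\le\cardstar{P\cup Q}+\cardstar{P\cap Q}$ for finite sets $P,Q$. This last holds because $\cardstar{X}$ equals $f(|X|)$ for the convex function $f(m)=\max\{0,m-(n-1)\}$, while $|P|+|Q|=|P\cup Q|+|P\cap Q|$ with $|P\cap Q|\le|P|,|Q|\le|P\cup Q|$ — i.e.\ it is the statement that a convex function on the integers has increasing differences.

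For transitivity, suppose $A\strong B$ and $B\strong C$ with $A\subseteq B\subseteq C$ in $\Cclq_0$; I must show $\predim(Y)\ge\predim(A)$ for every induced $Y$ with $A\subseteq Y\subseteq C$. Applying submodularity to $Y$ and $B$ inside $C$ gives $\predim(Y\cup B)+\predim(Y\cap B)\le\predim(Y)+\predim(B)$. Since $B\subseteq Y\cup B\subseteq C$, the hypothesis $B\strong C$ gives $\predim(Y\cup B)\ge\predim(B)$, hence $\predim(Y\cap B)\le\predim(Y)$. Since $A\subseteq Y\cap B\subseteq B$, the hypothesis $A\strong B$ gives $\predim(A)\le\predim(Y\cap B)$. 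Chaining these, $\predim(A)\le\predim(Y)$, as required.

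The only step with genuine content is the clique bookkeeping in the submodularity proof — identifying the maximal cliques of a substructure with the traces of the maximal cliques of $D$ — which is exactly where the hypothesis $D\in\Cclq_0$ is used; once the formula $s(C)=\sum_{K}\cardstar{K\cap C}$ is in hand, submodularity is an elementary convexity computation and transitivity follows formally.
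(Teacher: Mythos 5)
Your argument is correct. Note that the paper itself gives no proof of this statement: it is quoted as a fact from the cited reference, so there is no in-paper argument to compare against. Your two key steps are sound and are the standard route: the hypothesis $D\in\Cclq_0$ does exactly what you use it for, namely it makes $K\mapsto K\cap C$ a bijection from $\setcolon{K\in\maxcliques{D}}{|K\cap C|\geq n}$ onto $\maxcliques{C}$ for any induced $C\subseteq D$ (an $n$-subset of $S^D$ lies in a unique maximal clique, which gives both maximality of the traces and injectivity), whence $s(C)=\sum_{K\in\maxcliques{D}}\cardstar{K\cap C}$ and supermodularity of $s$ reduces termwise to the increasing-differences inequality for the convex function $m\mapsto\max\set{0,m-(n-1)}$ applied to $|P\cap Q|\leq |P|,|Q|\leq |P\cup Q|$ with $|P|+|Q|=|P\cup Q|+|P\cap Q|$. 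The deduction of transitivity of $\strong$ from submodularity via $\predim(Y\cap B)\leq\predim(Y)$ is the classical Hrushovski-construction argument and is stated correctly (for infinite ambient structures one reduces to finite $Y$ exactly as the paper's definition of $\predim(\cdot/\cdot)$ via infima intends).
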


\begin{definition}
Let $A_1,A_2\in \Cclq_0$ and let $B=A_1\cap A_2$ be a common induced substructure. Define the \emph{standard amalgam} of $A_1$ and $A_2$ over $B$ to be the unique $\set{S}$-structure $D$ whose universe is $A_1\cup A_2$ such that $\maxcliques{D} = M\cup M'$ where
\begin{gather*}
M = \setcolon{K\in\maxcliques{A_1}\cup \maxcliques{A_2}}{|K\cap B|<n}
\\
M' = \setcolon{K_1\cup K_2}{K_1\in\maxcliques{A_1}, K_2\in\maxcliques{A_2}, |K_1\cap K_2| \geq n}
\end{gather*}
\end{definition}

\begin{fact}
Let $A_1,A_2\in \Cclq_0$ be such that $B = A_1\cap A_2$ is a common substructure. Let $D$ be the standard amalgam of $A_1$ and $A_2$ over $B$. Then $\predim(D/A_1) = \predim(A_2/B)$. In particular, if $A_1,A_2\in\Cclq$, then $D\in\Cclq$.
\end{fact}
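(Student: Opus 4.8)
The plan is to reduce the whole statement to one identity about the clique-weight $s$, namely $s(D) = s(A_1) + s(A_2) - s(B)$. This suffices: the universe of $D$ is $A_1\cup A_2$ while $B = A_1\cap A_2$, so $|D| + |B| = |A_1| + |A_2|$, and hence (using $\predim(X/Y) = \predim(X) - \predim(Y)$) the displayed identity is equivalent to $\predim(D) + \predim(B) = \predim(A_1) + \predim(A_2)$, which rearranges exactly to $\predim(D/A_1) = \predim(A_2/B)$.

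To prove the $s$-identity I would lean on a single structural fact: in a member of $\Cclq_0$, a clique of size $\geq n$ lies in a \emph{unique} maximal clique, since two maximal cliques containing it would intersect in $\geq n$ points. Fix $i\in\{1,2\}$ and split $\maxcliques{A_i}$ into $M_i^{<}$ and $M_i^{\geq}$ according to whether $|K\cap B|$ is $<n$ or $\geq n$. First I would check that $K\mapsto K\cap B$ is a bijection $M_i^{\geq}\to\maxcliques{B}$: for $K\in M_i^{\geq}$ the clique $K\cap B$ lies in a unique maximal clique $L$ of $B$, which in turn lies in a unique maximal clique of $A_i$; that maximal clique must be $K$ itself, since otherwise two distinct maximal cliques of $A_i$ would both contain $K\cap B$, so $L\subseteq K$ and thus $L = K\cap B$; injectivity and surjectivity are then immediate from the uniqueness statement. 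Feeding the same uniqueness argument into the definition of $M'$, each member of $M'$ is $K_1\cup K_2$ with $K_j\in M_j^{\geq}$, $K_1\cap B = K_2\cap B =: L\in\maxcliques{B}$, and $K_1\cap K_2 = L$; conversely every $L\in\maxcliques{B}$ produces such a member, so $M'$ is indexed by $\maxcliques{B}$. Since $M = M_1^{<}\cup M_2^{<}$ and the two sets are disjoint (a common maximal clique would lie inside $B$), every maximal clique of $D$ is now accounted for.

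The remaining computation is routine. If $K_1(L), K_2(L)$ denote the maximal cliques of $A_1, A_2$ matched with $L\in\maxcliques{B}$, then $|K_1(L)\cup K_2(L)| = |K_1(L)| + |K_2(L)| - |L|$, and as all three cliques have size $\geq n$ the truncated cardinalities add the same way: $\cardstar{K_1(L)\cup K_2(L)} = \cardstar{K_1(L)} + \cardstar{K_2(L)} - \cardstar{L}$. Summing over $L\in\maxcliques{B}$ and adding the $M_1^{<}$ and $M_2^{<}$ contributions gives $s(D) = s(A_1) + s(A_2) - s(B)$, hence $\predim(D/A_1) = \predim(A_2/B)$. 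For the last clause, working in the amalgamation setting where $B\strong A_1$ and $B\strong A_2$: a direct check shows that restrictions of standard amalgams are again standard amalgams, so applying the first part to $A_1$ and $X\cap A_2$ inside $D$ yields $\predim(X/A_1) = \predim(X\cap A_2 / X\cap B)\geq 0$, where the inequality comes from $X\cap B\strong X\cap A_2$ (localizing $B\strong A_2$ via submodularity); thus $A_1\strong D$, symmetrically $A_2\strong D$, and transitivity of $\strong$ in $\Cclq_0$ together with $\{a\}\strong A_i$ gives $\{a\}\strong D$ for every $a\in D = A_1\cup A_2$; finally $D\in\Cclq_0$ follows from a short case analysis on pairs of maximal cliques, the only non-trivial case being a pair from $M'$ meeting in $\geq n$ points, which is impossible since it would exhibit a subset of $A_1$ of negative predimension over its trace on $B$, contradicting $B\strong A_1$.

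The step I expect to be the real obstacle is the precise identification of $\maxcliques{D}$ --- confirming that the prescribed $M\cup M'$ is genuinely the set of maximal cliques of the abstract $\set{S}$-structure $D$, and that this description is preserved under restriction --- together with the intersection bookkeeping in the $M'$–$M'$ part of the $\Cclq_0$ verification; once those are settled, everything else is a matter of tracking intersections with $B$ and invoking the uniqueness of the maximal clique above a clique of size $\geq n$.
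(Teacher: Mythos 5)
The paper does not prove this statement---it is quoted as a Fact from the cited source---so there is no in-paper argument to compare against; I can only judge your proposal on its own terms. The core of it, the identity $s(D)=s(A_1)+s(A_2)-s(B)$ and hence $\predim(D/A_1)=\predim(A_2/B)$, is correct and essentially complete: the bijection $K\mapsto K\cap B$ from $\setcolon{K\in\maxcliques{A_i}}{|K\cap B|\geq n}$ onto $\maxcliques{B}$, the resulting indexing of $M'$ by $\maxcliques{B}$ with $K_1\cap K_2=L$, the disjointness observations needed to avoid double counting, and the additivity of $\cardstar{\cdot}$ on cliques of size $\geq n$ all check out, using only membership in $\Cclq_0$ and the uniqueness of the maximal clique above a clique of size $\geq n$.

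Two remarks on the last clause. First, you quietly add the hypotheses $B\strong A_1$ and $B\strong A_2$. This is not optional: without them the clause is false as literally stated. For $n=2$ take $B=\set{a,b,c,d}$ with edges $ab,cd$, let $A_1=B\cup\set{u}$ carry the two triangles $abu,cdu$ and $A_2=B\cup\set{v}$ the triangles $abv,cdv$; both are in $\Cclq$, but the standard amalgam has maximal cliques $\set{a,b,u,v}$ and $\set{c,d,u,v}$ meeting in $\set{u,v}$, so $D\notin\Cclq_0$ (and $\predim(D)=0$). So your instinct to work "in the amalgamation setting" is the right reading of the Fact, but you should state it as a needed hypothesis rather than a convenience. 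Second, the step you flag as the obstacle---that restrictions of standard amalgams are standard amalgams---is indeed false for arbitrary $X\subseteq D$ (a trace of $K_1\cup K_2$ on $X$ can meet each of $K_1,K_2$ in $\geq n$ points while $X\cap K_1\cap K_2$ has size $<n$); it does hold for $A_1\subseteq X\subseteq D$, since then $B\subseteq X$ and each $K_2\cap X$ contains $K_1\cap K_2$ and is the maximal clique of $X\cap A_2$ lying over it, and that restricted version is all your argument uses---but this needs to be proved, not asserted. Likewise, in the $M'$--$M'$ case of the $\Cclq_0$ check, the intersection of two merged cliques is $(K_1\cap K_1')\cup(K_2\cap K_2')$, and the negative-predimension witness must be taken in $A_1$ or in $A_2$ according to which of the two pieces leaves $B$ (if both lie in $B$ the contradiction comes from $B\in\Cclq_0$ directly); the idea is right but the case split should be written out. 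With those repairs the proof is complete.
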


For $A,B\in \Cclq_0$, say that a first order embedding $f:A\to B$ is \emph{strong} if $f[A]\strong B$. The classes $\Cclq, \Csym$ are clearly closed under isomorphisms and substructures and have JEP. By the above fact, they also have AP with respect to strong embeddings. Thus, they each have a unique (up to isomorphism) countable generic structure. We denote these generic structures $\Mclq$ and $\Msym$ respectively. $\Msym$ is known as the $n$-ary uncollapsed symmetric ab initio Hrushovski construction.

\begin{fact}\label{symgeo is clqgeo}
$\geometry(\Mclq)\cong \geometry(\Msym)$. Moreover, for every $A^{\clq}\in\Cclq$ there is some $A^{\sym}\in\Csym$ such that $\geometry(A^{\clq}) = \geometry(A^{\sym})$, and vice versa.
\end{fact}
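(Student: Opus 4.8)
The plan is to exhibit, for each $A^{\clq}\in\Cclq$, a structure $A^{\sym}\in\Csym$ on a (possibly larger) universe together with a natural retraction that preserves $\predim$-computations, and then to check that this construction is compatible with the strong-embedding order well enough to transport the generic structure. The basic idea is clique-splitting: given $A^{\clq}\in\Cclq$, replace every maximal clique $K\in\maxcliques{A^{\clq}}$ of size $|K|>n$ by a fresh "hub" configuration. Concretely, for each such $K$ we add $|K|_*$ new points, arrange them (together with $n-1$ fixed anchor points chosen inside $K$, or via a small bookkeeping gadget) so that the new maximal cliques all have size exactly $n$, and so that the contribution of the gadget to $s(\cdot)$ equals $\cardstar{K}$ while the number of new vertices also equals $\cardstar{K}$. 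The point of matching these two quantities is that the gadget is $\predim$-neutral: it contributes $0$ to $\predim$, hence attaching it changes neither $\predim$ nor the relation $\strong$ in any essential way. One then verifies directly from the definition that $\predim(Y)$ for $Y\subseteq A^{\sym}$ equals $\predim(\pi(Y))$ for the obvious projection $\pi:A^{\sym}\to A^{\clq}$ on the "old" points, up to the neutral correction, so that $\dm_{\geometry(A^{\sym})}$ restricted to old points agrees with $\dm_{\geometry(A^{\clq})}$, and the new points are all in the closure of old points. Since the geometry only sees $\dm$ up to closure, $\geometry(A^{\clq})=\geometry(A^{\sym})$. The reverse direction (each $A^{\sym}\in\Csym$ already lies in $\Cclq$) is immediate, since $\Csym\subseteq\Cclq$ and one takes $A^{\clq}=A^{\sym}$.

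For the "moreover" statement this is essentially all that is needed, but to get $\geometry(\Mclq)\cong\geometry(\Msym)$ one must do the construction coherently. I would show first that $\Csym$ is cofinal in $\Cclq$ under $\strong$ in the following sense: for every $A^{\clq}\in\Cclq$ there is $A^{\sym}\in\Csym$ with $A^{\clq}\strong A^{\sym}$ (this is exactly the neutral-gadget attachment, together with the submodularity fact and the transitivity of $\strong$ to see that the attachment is strong). Then, using that $\Mclq$ is the generic structure for $(\Cclq,\strong)$, I would argue that $\Mclq$ contains a copy of $\Msym$ as a strong substructure whose universe, after the clique-splitting correspondence, carries the same geometry; dually, collapsing every maximal clique of $\Msym$ back to a single maximal clique (the inverse of clique-splitting, available because in $\Cclq_0$ distinct maximal cliques meet in fewer than $n$ points, so the split structure remembers which cliques to recombine) produces a strong substructure of $\Mclq$-type inside $\Msym$. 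A back-and-forth between these two pictures, preserving $\predim$ and $\strong$, yields an isomorphism of the associated geometries. Here one uses crucially Fact on the standard amalgam, which guarantees that the clique-split and clique-collapse operations commute with amalgamation, so the back-and-forth steps can actually be carried out inside the respective generic structures.

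The main obstacle I anticipate is bookkeeping around the $n-1$ anchor points in each split clique. When $|K|=n$ there is nothing to do, but when cliques of size $>n$ share their $\strong n-1$ common points with other cliques (which is permitted: $|K_1\cap K_2|<n$ allows intersections of size up to $n-1$), the gadgets attached to different cliques can interact, and one must make sure the $\predim$-neutrality is genuinely local — i.e. that $s(A^{\sym})$ decomposes as a sum over split cliques with no cross terms. This is where the hypothesis that the geometry is "not admitting dependent sets of size less than $n$" (equivalently, working inside $\Cclq_0$, where maximal cliques pairwise intersect in $<n$ points) does the real work: it ensures the new maximal cliques introduced by a gadget are not accidentally absorbed into, or merged with, maximal cliques coming from a different part of the structure. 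I would isolate this as a lemma — that the standard amalgam of a clique-split with a clique-split over a common clique-split is again a clique-split of the standard amalgam — and then the transfer of genericity follows formally. A secondary, more routine obstacle is checking that $\set{a}\strong A^{\sym}$ for every new point $a$, i.e. that $A^{\sym}\in\Cclq$ and not merely in $\Cclq_0$; this follows since each new point lies in a clique of size $n$ and $\predim$ of an $n$-clique over any of its points is $\geq 0$.
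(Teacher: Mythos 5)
First, a caveat: the paper gives no proof of this statement --- it is imported as a Fact from \cite{HassonMermelstein2017} --- so your argument has to be judged on its own terms, and on whether it delivers the statement in the form the rest of the paper actually uses.

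The first genuine gap concerns the universe. The Fact asserts \emph{equality} $\geometry(A^{\clq})=\geometry(A^{\sym})$, and the paper relies on this literally: Lemma \ref{changingLemma} forms $S^{D'}=(S^{D}\setminus S^{A})\cup S^{B}$, which only makes sense when the two structures sit on the same set of points. Your $A^{\sym}$ lives on a strictly larger universe, obtained by adjoining $\cardstar{K}$ fresh points per large clique, and the remark that ``the geometry only sees $\dm$ up to closure'' does not repair this: $\geometry(A^{\sym})$ is defined on the same universe as $A^{\sym}$, so it cannot equal $\geometry(A^{\clq})$. Worse, if the new $n$-cliques have the shape (anchor $(n-1)$-set)$\,\cup\,$(new point), then the old points of $K$ outside the anchor lie in no clique at all and become independent over the anchor, so even the restriction of $\dm$ to the old points is wrong; ``$\predim$-neutrality'' controls the count $|A|-s(A)$ but says nothing about which sets become dependent. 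The fix is to split $K$ \emph{in place}: replace the single maximal clique $K$ by $\cardstar{K}$ many $n$-subsets of $K$ itself, e.g.\ the sunflower $\setarg{X_0\cup\set{y}}{y\in K\setminus X_0}$ for a fixed core $X_0\in[K]^{n-1}$. This keeps the universe fixed, contributes exactly $\cardstar{K}$ to $s(\cdot)$, gives $\dm(Y)=\min(|Y|,n-1)$ for all $Y\subseteq K$ as required, and your ``no cross terms'' lemma (distinct split cliques do not merge, using that maximal cliques of $A^{\clq}$ meet in fewer than $n$ points) is then the right thing to prove.

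The second gap is the cofinality claim that every $A^{\clq}\in\Cclq$ satisfies $A^{\clq}\strong A^{\sym}$ for some $A^{\sym}\in\Csym$. This is false: if $K\in\maxcliques{A^{\clq}}$ has $|K|>n$, then $[K]^n\subseteq S$ persists in every extension, so any structure into which $A^{\clq}$ embeds (strongly or otherwise) has a maximal clique of size greater than $n$ and is not in $\Csym$. The passage from $\Cclq$ to $\Csym$ is not along embeddings but by rewriting $S$ on a fixed universe, which is exactly why the same-universe form of the ``moreover'' clause is what one needs. Consequently the proposed back-and-forth between substructures of $\Mclq$ and $\Msym$ does not get started in the $\Cclq\to\Csym$ direction; the isomorphism $\geometry(\Mclq)\cong\geometry(\Msym)$ should instead be obtained by building the two generics as unions of strong chains on a common universe whose terms are matched by the in-place splitting, via a changing-lemma argument of the kind the paper later uses to compare $\mathbb{M}$ with $\Mgeo$.
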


\section{The geometric construction} For a geometry $\pg{G}$ on a set $G$, observe that $\langle G, D_n\rangle$ is an $\set{S}$-structure in $\Cclq_0$. Say that an $\set{S}$-structure $A$ is \emph{geometric} if
\begin{itemize}
\item
Whenever $X\subseteq A$ with $|X|\geq n$ and $\predim(X) < n$, then there exists a unique $K\in\maxcliques{A}$ with $X\subseteq K$.
\end{itemize}

\begin{observation}
\label{largeDependentIsClique}
If $A$ is a geometric $\set{S}$-structure and $X\subseteq A$ is such that $|X|\geq n$ and $\predim(X)< n$, then $X$ is a clique and $\predim(X) = n-1$. Thus, $X\strong A$ for any $X\subseteq A$ with $|X|<n$.
\end{observation}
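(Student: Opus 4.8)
The plan is to unwind the definition of $\predim$, using that geometricity forces the relevant set to be a clique. The computation underlying everything is: if $Y$ is \emph{any} clique with $|Y|\geq n$, then in the induced substructure on $Y$ every $n$-subset belongs to $S$, so every subset of $Y$ of size $\geq n$ is a clique and hence $Y$ is the unique maximal clique of that substructure; thus $\maxcliques{Y}=\set{Y}$, $s(Y)=\cardstar{Y}=|Y|-(n-1)$ (using $|Y|\geq n$), and therefore $\predim(Y)=|Y|-s(Y)=n-1$.

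For the first two assertions, suppose $|X|\geq n$ and $\predim(X)<n$. These are precisely the hypotheses appearing in the definition of \emph{geometric}, so there is a unique $K\in\maxcliques{A}$ with $X\subseteq K$. Since $K$ is a clique, $[X]^n\subseteq [K]^n\subseteq S^A$, so $X$ is a clique; the computation above then gives $\predim(X)=n-1$, which is consistent with the hypothesis $n-1<n$ (so no further restriction on $|X|$ is produced).

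For the last assertion, fix $X\subseteq A$ with $|X|<n$, so that $X$ contains no clique and $\predim(X)=|X|$. I would first establish that $\predim(Z)\geq\min\set{|Z|,n-1}$ for every finite $Z\subseteq A$: if $|Z|<n$ this is an equality, while if $|Z|\geq n$ then either $\predim(Z)\geq n$, or $\predim(Z)<n$ and the first assertion applied to $Z$ forces $\predim(Z)=n-1$. Hence for every finite $Z\subseteq A$ we get $\predim(Z)\geq\min\set{|Z|,n-1}\geq |Z\cap X|$, using $|X|<n$ and $Z\cap X\subseteq Z$, i.e. $\predim(Z/Z\cap X)\geq 0$. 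Now for any substructure $Y$ with $X\subseteq Y\subseteq A$, the quantity $\predim(Y/X)$ equals $\predim(Y)-|Y\cap X|$ when $Y$ is finite and is an infimum of terms $\predim(Z/Z\cap X)$ over finite $Z\subseteq Y$ when $Y$ is infinite; in either case it is $\geq 0$, so $X\strong A$.

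I do not anticipate a genuine obstacle. The two points requiring care are: (i) recognizing that the maximal-clique decomposition entering the formula for $\predim(X)$ is the one of the induced substructure on $X$ — which, once $X$ is known to be a clique, is simply $\set{X}$ — and not a decomposition inherited from $A$; and (ii) the bookkeeping in the final case split, together with reading $X\strong A$ off the definition of relative predimension so that the infinite case is handled uniformly with the finite one.
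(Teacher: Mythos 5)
Your proof is correct, and since the paper states this as an Observation without proof, your unwinding of the definitions (geometricity gives a maximal clique containing $X$, hence $X$ is itself a clique with $\maxcliques{X}=\set{X}$ and $\predim(X)=n-1$; then the bound $\predim(Z)\geq\min\set{|Z|,n-1}$ yields $X\strong A$ for $|X|<n$) is exactly the intended routine verification. The two points of care you flag — computing $s(X)$ via the maximal cliques of the induced substructure, and handling the infinite case of $\strong$ via the infimum — are the right ones.
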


\begin{definition}
Define $\Cgeo$ to be the class of geometric $\set{S}$-structures.
\end{definition}

\begin{observation}
The class $\Cgeo$ is a subclass of $\Cclq$.
\end{observation}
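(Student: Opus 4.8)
The plan is to check that every geometric $\set{S}$-structure $A$ meets the two requirements in the definition of $\Cclq$: that $A\in\Cclq_0$, and that $\set{a}\strong A$ for every $a\in A$. The second requirement is immediate, since $|\set{a}|=1<n$ and the closing sentence of Observation~\ref{largeDependentIsClique} gives $\set{a}\strong A$ at once. So essentially all the work lies in verifying $A\in\Cclq_0$, i.e. that any two distinct maximal cliques of $A$ intersect in fewer than $n$ elements.

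I would prove this by contradiction. Suppose $K_1,K_2\in\maxcliques{A}$ are distinct with $|K_1\cap K_2|\geq n$, and put $X:=K_1\cap K_2$. Two elementary facts about cliques are needed. First, any subset of a clique having at least $n$ elements is itself a clique; hence $X$ is a clique. Second, if $K$ is any clique then, regarding $K$ as an induced substructure, $K$ is its own unique maximal clique, so $\maxcliques{K}=\set{K}$ and $\predim(K)=|K|-\cardstar{K}=|K|-(|K|-(n-1))=n-1$. Applying the second fact to $X$ gives $\predim(X)=n-1$, in particular $\predim(X)<n$. The defining property of a geometric structure then applies to $X$ and produces a \emph{unique} $K\in\maxcliques{A}$ with $X\subseteq K$; but $X\subseteq K_1$ and $X\subseteq K_2$ with both $K_1,K_2$ maximal cliques, so uniqueness forces $K_1=K=K_2$, contradicting $K_1\neq K_2$.

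Having both requirements, $A\in\Cclq$, whence $\Cgeo\subseteq\Cclq$. I expect no real obstacle: the only mildly non-formal point is the identification $\maxcliques{K}=\set{K}$ for a clique $K$ considered on its own (after which $\predim(K)=n-1$ is a one-line computation from the definitions of $s$ and $\cardstar{\cdot}$), and the rest is just unwinding the definitions of $\Cclq$, $\Cclq_0$ and ``geometric''.
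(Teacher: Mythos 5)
Your proof is correct; the paper states this observation without proof, and your argument—deducing $\set{a}\strong A$ from the last sentence of Observation~\ref{largeDependentIsClique}, and getting the $\Cclq_0$ condition by computing $\predim(K_1\cap K_2)=n-1$ for an intersection of size $\geq n$ and then invoking the uniqueness clause in the definition of geometric—is exactly the intended unwinding of the definitions. No gaps.
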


\begin{definition}
Let $A_1,A_2\in \Cgeo$ and let $B=A_1\cap A_2$ be a common induced substructure. Define the \emph{geometric amalgam} of $A_1$ and $A_2$ over $B$ to be the unique $\set{S}$-structure $D$ whose universe is $A_1\cup A_2$ such that $\maxcliques{D} = M\cup M_1\cup M_2$ where
\begin{gather*}
M = \setcolon{K_1\cup K_2}{K_1\in\maxcliques{A_1}, K_2\in\maxcliques{A_2}, |K_1\cap K_2| \geq n-1}
\\
M_1 = \setcolon{K\in\maxcliques{A_1}}{\forall L\in\maxcliques{A_2} ~~ |K\cap L|<n-1}
\\
M_2 = \setcolon{K\in\maxcliques{A_2}}{\forall L\in\maxcliques{A_1} ~~ |K\cap L|<n-1}
\end{gather*}
\end{definition}
The above amalgam differs from the standard amalgam in the definition of $M$, where we take the union of cliques if they overlap at $n-1$ points rather than in $n$ points.
\begin{lemma}
Let $A_1,A_2\in \Cgeo$ with $B=A_1\cap A_2$ and $B\strong A_1$. Let $D$ be the geometric amalgam of $A_1$ and $A_2$ over $B$. Then $D\in\Cgeo$.
\end{lemma}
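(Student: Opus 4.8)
The plan is to show that $D$, the geometric amalgam, both lies in $\Cclq$ and satisfies the defining clause of geometricity. Since $\Cgeo \subseteq \Cclq$ and $B \strong A_1$, I would first try to realize $D$ (or a structure closely related to it) as built from $A_1$ and $A_2$ by a sequence of standard amalgamation-type steps, so that the facts already established for $\Cclq_0$ — submodularity of $\predim$, transitivity of $\strong$, and the predimension identity $\predim(D/A_1)=\predim(A_2/B)$ for the standard amalgam — can be leveraged. The subtlety is that the geometric amalgam glues cliques overlapping in $n-1$ points, not $n$; so I would introduce an auxiliary structure $A_2'$ obtained from $A_2$ by, for each pair $(K_1,K_2)$ with $K_1\in\maxcliques{A_1}$, $K_2\in\maxcliques{A_2}$ and $|K_1\cap K_2|\geq n-1$, adjoining to $A_2$ (a copy of) the missing vertices of $K_1$ so that $K_1\cap K_2$ gets completed inside the $A_1$-side. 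The point is that adding a single vertex to an $(n-1)$-subset of an existing clique to form an $n$-clique is a $\predim$-neutral operation on objects of $\Cclq_0$: $\cardstar{\cdot}$ of the enlarged clique goes up by exactly $1$, and the vertex count goes up by $1$. After this completion, the geometric amalgam of $A_1$ and $A_2$ coincides with the standard amalgam of $A_1$ and $A_2'$ over the appropriate common part, and membership in $\Cclq$ follows from the fact about standard amalgams together with a check that the auxiliary completions keep us inside $\Cclq_0$ (distinct maximal cliques still meet in $<n$ points — this uses that $A_1,A_2\in\Cgeo\subseteq\Cclq_0$ and that the newly glued cliques $K_1\cup K_2$ are pairwise compatible because of how $M,M_1,M_2$ partition things).

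Second, I would verify the geometricity clause for $D$: given $X\subseteq D$ with $|X|\geq n$ and $\predim(X)<n$, I must produce a unique maximal clique of $D$ containing $X$. By Observation~\ref{largeDependentIsClique} applied inside $D$ once we know $D\in\Cclq$, such an $X$ is already a clique with $\predim(X)=n-1$, so the real content is existence and uniqueness of a \emph{maximal} clique extending it. Here I would argue that any clique $X$ in $D$ with $|X|\geq n$ must be "localized": writing $X_i = X\cap A_i$, submodularity together with $B\strong A_1$ forces $\predim(X)\geq \predim(X_1)+\predim(X_2)-\predim(X_1\cap X_2)$, and since the $X_i$ are themselves cliques (being subsets of the clique $X$ that lie in geometric structures), each $\predim(X_i)\in\set{|X_i|,\, n-1}$ depending on whether $|X_i|<n$. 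Pushing this inequality, $\predim(X)<n$ is possible only when $X_1,X_2$ together with their intersection fit inside a single glued clique $K_1\cup K_2$ from $M$ (or a single clique from $M_1$ or $M_2$). Uniqueness then follows from the $\Cclq_0$ condition on $D$, which we already have.

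The main obstacle I expect is the bookkeeping in the second part: controlling $\predim(X)$ for a clique $X$ that straddles the amalgam and concluding it is contained in one of the $K_1\cup K_2$. The inequality from submodularity gives a lower bound on $\predim(X)$, but to force localization one needs the reverse direction — that if $X$ is genuinely spread across $A_1$ and $A_2$ in a way not captured by a single overlap $\geq n-1$, then $\predim(X_1\cap X_2)$ is too small (because $X_1\cap X_2\subseteq B$ and $B\strong A_1$ gives $\predim(X_1/X_1\cap X_2)\geq 0$, hence $\predim(X_1\cap X_2)\leq \predim(X_1)$ with the deficit exactly measuring how much of $X_1$'s clique structure is "new" over $B$), and assembling these to contradict $\predim(X)<n$. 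I would handle this by a careful case analysis on $|X_1\cap B|$, $|X_1\cap X_2|$, and whether $|X_i|\geq n$, using $\cardstar{\cdot}$ arithmetic; everything else (the $\Cclq$ membership, uniqueness) should be routine given the quoted facts.
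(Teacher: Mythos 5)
There is a genuine gap, and it sits at the center of your second step. You invoke Observation~\ref{largeDependentIsClique} for $D$ "once we know $D\in\Cclq$" to conclude that any $X$ with $|X|\geq n$ and $\predim(X)<n$ is already a clique. But that observation is a consequence of being \emph{geometric}, not of being in $\Cclq$, so you are assuming the conclusion of the lemma. Membership in $\Cclq$ does not give you this: if $K_1,K_2$ are maximal cliques of a $\Cclq$-structure sharing exactly $n-1$ points, then $\predim(K_1\cup K_2)=n-1<n$ while $K_1\cup K_2$ need not be a clique. Such configurations are exactly what the geometricity clause must rule out in $D$, and they are the whole content of the existence half of the proof: one must show that an \emph{arbitrary} $X$ with $|X|\geq n$ and $\predim(X)<n$ --- not a clique $X$ --- lies inside a single maximal clique of $D$. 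By restricting attention to cliques $X$ ("the $X_i$ are themselves cliques, being subsets of the clique $X$"), your case analysis never engages with the actual difficulty.

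Two further points. First, your submodularity inequality is written backwards: submodularity gives $\predim(X)\leq\predim(X_1)+\predim(X_2)-\predim(X_1\cap X_2)$, an upper bound, whereas what the argument needs is the lower bound $\predim(X/X\cap A_2)\geq 0$. The paper obtains this by first enlarging $X$ so that every $K\in\maxcliques{D}$ meeting $X$ in $\geq n$ points is absorbed into $X$; only then does $\predim(X/X\cap A_2)$ equal $\predim(X\cap A_1/X\cap B)$, to which $B\strong A_1$ applies. That enlargement step is absent from your sketch and cannot be skipped. Second, the detour through an auxiliary $A_2'$ to establish $D\in\Cclq$ is both shakier than you suggest (adjoining the "missing vertices of $K_1$" to $A_2$ destroys $A_1\cap A_2=B$, and the identification with a standard amalgam needs real verification) and unnecessary: $\Cgeo$ is by definition the class of geometric structures and $\Cgeo\subseteq\Cclq$ is already observed, so one need only verify the geometricity clause for $D$ directly --- uniqueness of the containing clique follows from a short computation with $B\strong A_1$ rather than from a prior proof that $D\in\Cclq_0$.
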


\begin{proof}
Let $X\subseteq D$ with $|X|\geq n$ and $\predim(X)< n$. We must show there is a unique $K\in\maxcliques{D}$ such that $X\subseteq K$. Since $A_1,A_2\in\Cgeo$, we may assume that $X\nsubseteq A_1$ and $X\nsubseteq A_2$.

We show that there is at most one clique containing $X$. Assume there are $K,L\in\maxcliques{D}$ distinct such that $X\subseteq K\cap L$. Then $K = K_1\cup K_2$, $L = L_1\cup L_2$ where $K_i,L_i\in\maxcliques{A_i}$ and it must be that $|K_1\cap B|,|L_1\cap B|\geq n-1$. Let $C= (K_1\cup L_1)\cap B$, let $x\in X\setminus A_2$. Then $\predim(C\cup\set{x}/C) <0$ in contradiction to $B\strong A_1$.

We have left to show that $X$ is contained in a clique. It suffices to show that this holds of some superset of $X$, so we may enlarge $X$. Thus, assume that for any $K\in\maxcliques{D}$, if $|K\cap X|\geq n$, then $K\subseteq X$. This implies $\predim(X/X\cap A_2) = \predim(X\cap A_1/X\cap B)$.

We claim $|X\cap A_1|\geq n$. Assume $|X\cap A_1| < n$. Then $\predim(X/X\cap A_2) = |X\cap (A_1\setminus A_2)|$. It must be that $|X\cap A_2| \geq n$, for otherwise $\predim(X) = |X| \geq n$, in contradiction. As $\predim(X\cap A_2)< \predim(X)< n$ and $|X\cap A_2|\geq n$, this is a contradiction to $A_2$ being geometric. A symmetric argument yields that also $|X\cap A_2| \geq n$.

Now, since $B\strong A_1$, we have that $\predim(X/X\cap A_2) \geq 0$. Thus, $\predim(X\cap A_2) \leq \predim(X) < n$. Since $|X\cap A_2| \geq n$, by Observation \ref{largeDependentIsClique}, there exists $K_2\in\maxcliques{A_2}$ such that $X\cap A_2= K_2$ and $\predim(X\cap A_2) = n-1$. As $\predim(X\cap A_1/X\cap B) =\predim(X/X\cap A_2) = 0$ and, since $X\cap B\subseteq K_2$, also $\predim(X\cap B) \leq n-1$, we have $\predim(X\cap A_1) <n$. Therefore, there is some clique $K_1\in\maxcliques{A_1}$ such that $X\cap A_1 = K_1$. If $|K_1\cap K_2|<n-1$, then $\predim(X) \geq n$ in contradiction. So $|K_1\cap K_2|\geq n-1$ and $K_1\cup K_2\in\maxcliques{D}$ contains $X$.
\end{proof}

\begin{remark}
The geometric amalgam is also an amalgam for $\Cclq$.
\end{remark}

Clearly $\Cgeo$ is closed under isomorphism and substructures, and has JEP. The above lemma gives us AP for $\Cgeo$, and so $\Cgeo$ has a countable generic limit $\Mgeo$. The closed sets of dimension $(n-1)$ in $\Mgeo$ are exactly $\maxcliques{\Mgeo}$.

\section{$\Mgeo$ as the geometry of a generic structure}
\begin{definition}
Let $\pg{A}$ be a flat, $(n-1)$ pure geometry with underlying set $A$. We define $\pg{A}^{\geo}$ to be the $\set{S}$-structure with underlying set $A$ and $\maxcliques{\pg{A}^{\geo}} = \setarg{\cl_{\pg{A}}(B)}{B\in [A]^{n-1}, \cl_{\pg{A}}(B) \neq B}$.
\end{definition}

\begin{observation}
If $\pg{B}\subseteq\pg{A}$, then $\pg{B}^{\geo}\subseteq\pg{A}^{\geo}$.
\end{observation}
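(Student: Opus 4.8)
The plan is to prove the stronger, more explicit statement that $\pg{B}^{\geo}$ is exactly the induced $\set{S}$-substructure $\pg{A}^{\geo}\restrictedto B$ of $\pg{A}^{\geo}$ on the set $B$; note that $\pg{B}\subseteq\pg{A}$ entails $B\subseteq A$. As both $\pg{B}^{\geo}$ and $\pg{A}^{\geo}\restrictedto B$ are $\set{S}$-structures with universe $B$, and $S$ is symmetric and irreflexive, it suffices to show that an $n$-element subset of $B$ is an edge of one exactly when it is an edge of the other. The key tool, which I would establish first, is a dimension-theoretic description of the edge relation of $\pg{A}^{\geo}$: for every $Y\in[A]^n$,
\[
Y\in S^{\pg{A}^{\geo}} \iff Y \text{ is dependent in } \pg{A},
\]
and, of course, the identical statement holds for $\pg{B}$ in place of $\pg{A}$.

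To prove this description I would use only $(n-1)$-purity together with standard facts about the closure operator. For the forward direction, $Y\in S^{\pg{A}^{\geo}}$ means, by the definition of $\pg{A}^{\geo}$, that $Y\subseteq\cl_{\pg{A}}(B_0)$ for some $B_0\in[A]^{n-1}$; since $(n-1)$-purity makes $B_0$ independent, $\dm_{\pg{A}}(Y)\leq\dm_{\pg{A}}(\cl_{\pg{A}}(B_0))=n-1<|Y|$, so $Y$ is dependent. For the converse, if $Y\in[A]^n$ is dependent then $(n-1)$-purity makes every $(n-1)$-subset of $Y$ independent, so $\dm_{\pg{A}}(Y)=n-1$; fixing an independent $Y_0\in[Y]^{n-1}$, i.e.\ a basis of $Y$, we get $\cl_{\pg{A}}(Y_0)=\cl_{\pg{A}}(Y)\supseteq Y\supsetneq Y_0$, so $\cl_{\pg{A}}(Y_0)$ is a maximal clique of $\pg{A}^{\geo}$ containing $Y$, whence $Y\in S^{\pg{A}^{\geo}}$.

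With the description in hand the statement is immediate: $\pg{B}\subseteq\pg{A}$ means precisely that $\dm_{\pg{B}}$ is the restriction of $\dm_{\pg{A}}$ to subsets of $B$, hence an $n$-element subset of $B$ is dependent in $\pg{B}$ if and only if it is dependent in $\pg{A}$; applying the description to both geometries yields $S^{\pg{B}^{\geo}}=S^{\pg{A}^{\geo}}\cap B^n$, which is what we want. The one point I expect to require a little care is resisting the urge to compare maximal cliques directly: a maximal clique $\cl_{\pg{B}}(B_0)$ of $\pg{B}^{\geo}$ equals $\cl_{\pg{A}}(B_0)\cap B$, which can be a proper subset of the clique $\cl_{\pg{A}}(B_0)$ of $\pg{A}^{\geo}$, and the conditions $\cl_{\pg{B}}(B_0)\neq B_0$ and $\cl_{\pg{A}}(B_0)\neq B_0$ need not be equivalent; passing instead through the edge relation on $n$-element sets, which is intrinsic to each geometry and manifestly preserved under restriction, is what keeps the argument clean.
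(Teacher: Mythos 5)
Your proof is correct, and since the paper states this as an unproved Observation, your argument is exactly the one being left implicit: reduce the comparison of the $\set{S}$-structures to the edge relation on $n$-element sets, identify $S^{\pg{A}^{\geo}}$ with the dependent $n$-sets of $\pg{A}$ via $(n-1)$-purity, and use that dependence is preserved under passing to a subgeometry. Your closing caveat about not comparing maximal cliques directly (since $\cl_{\pg{B}}(B_0)=\cl_{\pg{A}}(B_0)\cap B$ may be a proper subset, or even equal to $B_0$) is a genuine subtlety and is handled correctly.
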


\begin{observation}\label{geoOperatorIsIdempotentOnGeo}
For every $A\in \Cgeo$,
\begin{itemize}
\item
$\geometry(A)$ is $(n-1)$-pure.
\item
$(\geometry(A))^{\geo} = A$.
\end{itemize}
\end{observation}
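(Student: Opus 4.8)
The plan is to determine, for $A\in\Cgeo$ and a finite $X\subseteq A$, exactly when $X$ is dependent in $\geometry(A)$, and in that case to identify $\cl_{\geometry(A)}(X)$ with a maximal clique of $A$; both bullets then follow. For the first bullet, recall $\dm_{\geometry(A)}(X)=\inf\setcolon{\predim(Y)}{X\subseteq Y\subseteq A}$, so $\dm_{\geometry(A)}(X)\leq\predim(K)$ whenever $X\subseteq K\subseteq A$. If $|X|<n$ then $X$ has no clique, so $\predim(X)=|X|$; moreover $X\strong A$ by Observation~\ref{largeDependentIsClique}, hence $\predim(Y)\geq\predim(X)=|X|$ for every finite $Y$ with $X\subseteq Y\subseteq A$, and so $\dm_{\geometry(A)}(X)=|X|$. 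Thus every $(n-1)$-tuple of $\geometry(A)$ is independent. For maximality (assuming $\maxcliques{A}\neq\emptyset$; the cliqueless case gives the free geometry, a degenerate exception), pick $K\in\maxcliques{A}$ and $X\in[K]^{n}$: then $\maxcliques{X}=\set{X}$, so $s(X)=\cardstar{X}=1$ and $\predim(X)=|X|-s(X)=n-1$, whence $\dm_{\geometry(A)}(X)\leq n-1<|X|$, exhibiting a dependent $n$-tuple. Since flatness of $\geometry(A)$ is part of its definition, the first bullet follows.

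The crux of the second bullet is the lemma that \emph{every $B\in[A]^{n-1}$ lies in at most one maximal clique of $A$.} Suppose $B\subseteq K_1\cap K_2$ with $K_1\neq K_2$ in $\maxcliques{A}$; as $A\in\Cclq_0$ we have $|K_1\cap K_2|<n$, so $K_1\cap K_2=B$. Fix $a_i\in K_i\setminus B$ (note $a_i\notin K_{3-i}$, so $a_1\neq a_2$) and put $X=B\cup\set{a_1,a_2}$, so $|X|=n+1$. Then $s(X)\geq2$: if $X$ is a clique then $\maxcliques{X}=\set{X}$ and $s(X)=\cardstar{X}=2$; otherwise $B\cup\set{a_1}$ and $B\cup\set{a_2}$ are distinct $n$-element maximal cliques of $X$ (neither is properly contained in a clique of $X$, since $X$ itself is not one), each contributing $1$. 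Hence $\predim(X)=(n+1)-s(X)\leq n-1<n$, so since $A$ is geometric $X$ is contained in a unique $K\in\maxcliques{A}$. But $B\cup\set{a_i}\subseteq X\subseteq K$, and $B\cup\set{a_i}$ is an $n$-element clique, so it lies in a unique maximal clique, namely $K_i$; thus $K=K_1=K_2$, a contradiction.

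Granting the lemma, I would compute $\cl_{\geometry(A)}(B)$ for each $B\in[A]^{n-1}$, using $\dm_{\geometry(A)}(B)=n-1$ from the first bullet. If $B$ lies in no maximal clique of $A$, then no $a\notin B$ can satisfy $\dm_{\geometry(A)}(B\cup\set{a})=n-1$: such an $a$ gives some finite $Y\supseteq B\cup\set{a}$ with $\predim(Y)<n$, which by Observation~\ref{largeDependentIsClique} is a clique, forcing $B$ into a maximal clique; so $\cl_{\geometry(A)}(B)=B$. If instead $B\subseteq K$ for the unique $K\in\maxcliques{A}$ containing it, then for every $a\in K\setminus B$ we have $n-1=\dm_{\geometry(A)}(B)\leq\dm_{\geometry(A)}(B\cup\set{a})\leq\predim(K)=n-1$, so $a\in\cl_{\geometry(A)}(B)$, giving $K\subseteq\cl_{\geometry(A)}(B)$; conversely any $a\in\cl_{\geometry(A)}(B)\setminus B$ makes $B\cup\set{a}$ an $n$-element set of dimension $n-1$, so (again via Observation~\ref{largeDependentIsClique}) it lies in a maximal clique, which contains $B$ and hence, by the lemma, equals $K$, so $a\in K$. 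Therefore $\cl_{\geometry(A)}(B)=K\supsetneq B$. Since every $K\in\maxcliques{A}$ has $|K|\geq n$ and so contains an $(n-1)$-subset, the family $\setcolon{\cl_{\geometry(A)}(B)}{B\in[A]^{n-1},\ \cl_{\geometry(A)}(B)\neq B}$ is exactly $\maxcliques{A}$; that is, $\maxcliques{(\geometry(A))^{\geo}}=\maxcliques{A}$. As an $\set{S}$-structure is determined by its universe together with its family of maximal cliques (an $n$-tuple of distinct elements lies in $S^A$ iff its underlying set is contained in a maximal clique), $(\geometry(A))^{\geo}=A$.

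The step I expect to be the main obstacle is the lemma, and within it the uniform verification that $s(X)\geq2$ --- one must cover both the case that $X$ is itself a clique and the case that various $n$-subsets $(B\setminus\set{b})\cup\set{a_1,a_2}$ do or do not belong to $S^A$. The remaining arguments are a direct unwinding of the definitions of $\dm_{\geometry(A)}$, $\cl_{\geometry(A)}$ and the $\geo$ operator, together with Observation~\ref{largeDependentIsClique}.
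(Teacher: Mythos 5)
Your proposal is correct. The paper states this as an \emph{Observation} and supplies no proof, so there is nothing to compare your argument against; what you have written is a complete and careful verification of exactly the kind the author presumably intended to leave to the reader. The two substantive points you isolate are the right ones: (i) the sublemma that each $(n-1)$-element set lies in at most one maximal clique, which does \emph{not} follow directly from the definition of geometric (that only gives uniqueness for sets of size $\geq n$ of small predimension) and genuinely requires your auxiliary set $X=B\cup\set{a_1,a_2}$ with the two-case count $s(X)\geq 2$; and (ii) the identification $\cl_{\geometry(A)}(B)=K$ versus $\cl_{\geometry(A)}(B)=B$ according to whether $B$ lies in a maximal clique, from which $\maxcliques{(\geometry(A))^{\geo}}=\maxcliques{A}$ and hence equality of structures follows, since $S^A$ is recoverable from $\maxcliques{A}$. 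Your remark about the clique-free case is also apt: for $A\in\Cgeo$ with $\maxcliques{A}=\emptyset$ and $|A|\geq n$ the first bullet fails as literally stated (every $n$-set is independent), though the second bullet still holds; this is an imprecision in the paper rather than a gap in your argument.
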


\begin{lemma}\label{deltaGreaterThand}
For any flat, $(n-1)$-pure geometry $\pg{B}$ and $\pg{A}\subseteq \pg{B}$, we have $\predim(\pg{A}^{\geo})\geq \dm_{\pg{B}}(A)$.
\end{lemma}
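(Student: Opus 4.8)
The plan is to compute $\predim(\pg{A}^{\geo})$ from the definition, reduce the claim to an inequality relating the sizes and dimensions of the maximal cliques of $\pg{A}^{\geo}$ and their mutual intersections, and then obtain that inequality from flatness together with an inclusion--exclusion count. Throughout, $A$ is finite (as $\dm_{\pg{B}}(A)$ must be defined).

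Let $K_1,\dots,K_m$ be the maximal cliques of $\pg{A}^{\geo}$. By definition each $K_j=\cl_{\pg{A}}(C_j)$ for some $C_j\in[A]^{n-1}$ with $\cl_{\pg{A}}(C_j)\neq C_j$, so $K_j$ is a closed set of $\pg{A}$ with $\dm_{\pg{A}}(K_j)=n-1$ and $|K_j|\geq n$, whence $\cardstar{K_j}=|K_j|-(n-1)$. Hence $\predim(\pg{A}^{\geo})=|A|-\sum_{j}(|K_j|-(n-1))$, and it suffices to show $\sum_{j}(|K_j|-(n-1))\leq |A|-\dm_{\pg{B}}(A)$. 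Put $U=\bigcup_j K_j$. Since $\dm_{\pg{B}}(A)-\dm_{\pg{B}}(U)\leq |A\setminus U|$, i.e. $|A|-\dm_{\pg{B}}(A)\geq |U|-\dm_{\pg{B}}(U)$, it is enough to prove $\sum_{j}(|K_j|-(n-1))\leq |U|-\dm_{\pg{B}}(U)$.

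For this I would apply flatness to the closed sets $K_1,\dots,K_m$ \emph{inside} the sub-geometry $\pg{A}$ — which is legitimate because $\pg{A}$, being a sub-geometry of the flat geometry $\pg{B}$, is itself flat, and because $\dm_{\pg{A}}$ agrees with $\dm_{\pg{B}}$ on subsets of $A$. Writing $K_s=\bigcap_{i\in s}K_i$ for $\emptyset\neq s\subseteq\{1,\dots,m\}$, flatness yields $\dm_{\pg{A}}(\cl_{\pg{A}}(U))+\sum_{\emptyset\neq s}(-1)^{|s|}\dm_{\pg{A}}(K_s)\leq 0$. Here $\dm_{\pg{A}}(\cl_{\pg{A}}(U))=\dm_{\pg{A}}(U)=\dm_{\pg{B}}(U)$ and $\dm_{\pg{A}}(K_j)=n-1$; and for $|s|\geq 2$, any $(n-1)$-subset of $K_s\subseteq K_i\cap K_{i'}$ (distinct $i,i'\in s$) would be a spanning independent subset of both $K_i$ and $K_{i'}$, forcing $K_i=K_{i'}$, so $|K_s|<n$, hence $K_s$ is independent and $\dm_{\pg{A}}(K_s)=|K_s|$. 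Substituting gives $\dm_{\pg{B}}(U)\leq m(n-1)+\sum_{|s|\geq 2}(-1)^{|s|+1}|K_s|$. Comparing with the inclusion--exclusion identity $|U|=\sum_j|K_j|+\sum_{|s|\geq 2}(-1)^{|s|+1}|K_s|$, one gets $\sum_{|s|\geq 2}(-1)^{|s|+1}|K_s|=|U|-\sum_j|K_j|$ and therefore $\dm_{\pg{B}}(U)\leq |U|-\sum_j(|K_j|-(n-1))$, exactly the required inequality. The case $m=0$ is immediate, since then $\predim(\pg{A}^{\geo})=|A|\geq\dm_{\pg{B}}(A)$.

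The step needing the most care is the passage through flatness: it must be carried out in $\pg{A}$ rather than $\pg{B}$, because the $\pg{B}$-closures $\cl_{\pg{B}}(K_j)$ may intersect in sets strictly larger than the $K_s$, and an alternating sum is not monotone in its terms, so such a discrepancy would destroy the estimate. Inside $\pg{A}$, the intersections of the $K_j$ are exactly the small (hence independent) sets $K_s$, and the inclusion--exclusion bookkeeping closes cleanly. The only external input is the standard fact that a sub-geometry of a flat geometry is flat.
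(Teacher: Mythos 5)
Your argument is correct in outline and, at its core, identical to the paper's: reduce to $U=\bigcup_j K_j$ (the paper's reduction to $\check{A}=A$), note that by $(n-1)$-purity all intersections of two or more distinct cliques are independent, and combine flatness with inclusion--exclusion. The one genuine divergence is where flatness is applied. The paper first enlarges $A$ to $\widehat{A}=A\cup\bigcup_{X\in[A]^{n-1}}\cl_{\pg{B}}(X)$, checking that this only strengthens the claim, so that the cliques become closed in $\pg{B}$ and flatness is applied in the ambient geometry; you instead apply flatness inside the restriction $\pg{A}$ and outsource the justification to ``a sub-geometry of a flat geometry is flat.'' That statement is true, but it is not a freebie: for three or more closed sets of a restriction, their $\pg{B}$-closures can intersect in strictly larger sets, and an alternating sum is not monotone in its terms --- which is exactly the obstruction you yourself articulate. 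So you have traded the paper's short enlargement step for an external lemma whose proof must confront the same difficulty; as written this is the one unproven ingredient, and you should either supply a reference or a proof. Note also that your stated reason for refusing to work in $\pg{B}$ is a misdiagnosis in this particular situation: distinct maximal cliques $K_i\neq K_j$ have distinct $\pg{B}$-closures $F_i\neq F_j$ (because $K_i=F_i\cap A$), each of dimension $n-1$, so $\dm_{\pg{B}}(F_i\cap F_j)\leq n-2$ and hence $|F_i\cap F_j|\leq n-2$ by $(n-1)$-purity; running your computation with the $F_j$ in place of the $K_j$ and using $|\bigcup_j F_j|-|U|\leq\sum_j(|F_j|-|K_j|)$ recovers the required inequality with no appeal to flatness of restrictions. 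That is precisely what the paper's $\widehat{A}$ step packages.
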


\begin{proof}
Assume $|A|>n-1$, for otherwise $\predim(\pg{A}^{\geo})= \dm_{\pg{B}}(A) = |A|$. Let
\[
\widehat{A}_0 = A\cup\bigcup_{X\in[A]^{n-1}}\cl_{\pg{B}}(X).
\]
Clearly $\predim(\pg{A}^{\geo}) \geq \predim(\widehat{\pg{A}}^{\geo})$ and $\dm_{\pg{B}}(\widehat{A})\geq\dm_{\pg{B}}(A)$, so we may assume $\widehat{A}=A$. Letting $\check{A} = \bigcup \maxcliques{\pg{A}^{\geo}}$, note that $\predim(\pg{A}^{\geo}) -\predim(\check{\pg{A}}^{\geo}) = |A\setminus \check{A}| \geq \dm_{\pg{B}}(A) - \dm_{\pg{B}}(\check{A})$, so we may also assume $\check{A} = A$.

Enumerate $\maxcliques{\pg{A}^{\geo}}=\set{\otn{E}{k}}$ and note that $A=\bigcup_{i=1}^k E_i$. Observe that by $(n-1)$-purity, $\dm_{\pg{B}}(E_s) = |E_s|$ whenever $|s|\geq 2$. Then
\begin{align*}
\sum_{\emptyset\neq s\subseteq \set{1,\dots,k}}(-1)^{|s|+1}\dm_{\pg{B}}(E_s) &= \sum_{i=1}^k \dm_{\pg{B}}(E_i) + \sum_{\substack{s\subseteq \set{1,\dots,k}\\|s|\geq 2}} (-1)^{|s|+1}|E_s|
\\
&= \sum_{i=1}^k |E_i| -\cardstar{E_i} + \sum_{\substack{s\subseteq \set{1,\dots,k}\\|s|\geq 2}} (-1)^{|s|+1}|E_s|
\\
&= \sum_{\emptyset\neq s\subseteq \set{1,\dots,k}} (-1)^{|s|+1}|E_s| - \sum_{i=1}^k \cardstar{E_i}
\\
&= |\bigcup_{i=1}^k E_i| - \sum_{i=1}^k \cardstar{E_i}
\\
&= |A| - s(\pg{A}^{\geo}) = \predim(\pg{A}^{\geo})
\end{align*}
By flatness, $-\dm_{\pg{B}}(A) +\sum_{\emptyset\neq s\subseteq \set{1,\dots,k}}(-1)^{|S|+1}\dm_{\pg{B}}(E_s) \geq 0$, so $\predim(\pg{A}^{\geo}) \geq \dm_{\pg{B}}(A)$.
\end{proof}

\begin{lemma}
\label{Cclq (n-1)-pure is Cgeo}
Let $A\in\Cclq$ be such that $\pg{A}:=\geometry(A)$ is $(n-1)$-pure, then $\pg{A}^{\geo}\in\Cgeo$.
\end{lemma}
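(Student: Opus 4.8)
The plan is to verify the defining property of $\Cgeo$ for $B := \pg A^{\geo}$, where $\pg A := \geometry(A)$ is a flat and, by hypothesis, $(n-1)$-pure geometry; the one substantive input will be Lemma \ref{deltaGreaterThand}.

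First I would record the combinatorics of $B$. By $(n-1)$-purity, for $Z\in[A]^{n-1}$ the set $\cl_{\pg A}(Z)$ is closed of dimension $n-1$, and conversely every closed set of dimension $n-1$ arises this way (take any basis), contributing to $\maxcliques{B}$ precisely when it has more than $n-1$ elements. Thus $\maxcliques{B}$ is exactly the family of closed subsets of $\pg A$ of dimension $n-1$ and size $\geq n$. Since a closed proper subset of a closed set has strictly smaller dimension, two distinct members $E_1,E_2$ of this family meet in a closed set of dimension $\leq n-2$, which by $(n-1)$-purity has fewer than $n$ elements; hence $B\in\Cclq_0$, and in particular the listed sets really are all the maximal cliques of $B$ (if $K$ is any clique of $B$ then every $n$-subset of $K$ lies in $S^B$, so $\dm_{\pg A}(K)=n-1$ and $\cl_{\pg A}(K)$ is a listed clique containing $K$). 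The same description of $S^B$, namely $\bar y\in S^B$ iff $\dm_{\pg A}(\bar y)=n-1$, shows that for every $X\subseteq B$ the induced substructure on $X$ is precisely $(\pg A\restrictedto X)^{\geo}$, so $\predim(X)=\predim((\pg A\restrictedto X)^{\geo})$.

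Now fix $X\subseteq B$ with $|X|\geq n$ and $\predim(X)<n$; I must produce a unique $K\in\maxcliques{B}$ with $X\subseteq K$. Uniqueness is immediate from $B\in\Cclq_0$, since two maximal cliques containing $X$ would meet in at least $|X|\geq n$ points. For existence, apply Lemma \ref{deltaGreaterThand} with ambient geometry $\geometry(A)$ and its sub-geometry on $X$: since the induced substructure on $X$ is $(\pg A\restrictedto X)^{\geo}$, this gives $\predim(X)\geq\dm_{\pg A}(X)$, so $\dm_{\pg A}(X)<n$; on the other hand $(n-1)$-purity together with $|X|\geq n-1$ forces $\dm_{\pg A}(X)\geq n-1$, whence $\dm_{\pg A}(X)=n-1$. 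Then $\cl_{\pg A}(X)$ is closed of dimension $n-1$ and size $\geq|X|\geq n$, i.e.\ a maximal clique of $B$, and it contains $X$.

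The only genuinely nontrivial ingredient is Lemma \ref{deltaGreaterThand} (which in turn rests on flatness); everything else is unwinding the definition of the operator $(\cdot)^{\geo}$. I expect the fussiest point to be the first paragraph — confirming that the sets named in the definition of $\maxcliques{\pg A^{\geo}}$ are exactly the maximal cliques of the resulting $\{S\}$-structure, and that forming induced substructures commutes with $(\cdot)^{\geo}$ — but this is routine matroid bookkeeping once $(n-1)$-purity is in hand.
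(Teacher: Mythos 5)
Your proof is correct, but it takes a genuinely different route from the paper's. The paper also begins by invoking Lemma \ref{deltaGreaterThand} to place $\pg{A}^{\geo}$ in $\Cclq$, but for the main point (existence of a clique containing a dependent $X$) it runs an induction over the maximal cliques $K_1,\dots,K_r$ covering $X$, comparing the predimension computed in $\pg{A}^{\geo}$ with the predimension $\predim^A$ computed in the \emph{original} structure $A$; the inductive step uses submodularity of $\predim$ on $\Cclq_0$ together with the facts that each $K_i$ is closed in $\geometry(A)$, hence strong in $A$ with $\predim^A(K_i)=n-1$. This yields $\predim^A(X)\leq\predim^{\pg{A}^{\geo}}(X)<n$ and hence $\dm_{\pg{A}}(X)=n-1$. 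You instead observe that forming induced substructures commutes with $(\cdot)^{\geo}$ --- the substructure of $\pg{A}^{\geo}$ on $X$ is $(\pg{A}\restrictedto X)^{\geo}$, since in both structures $S$ holds of exactly the dependent $n$-tuples --- and then apply Lemma \ref{deltaGreaterThand} with ambient geometry $\geometry(A)$ and subgeometry $\pg{A}\restrictedto X$ to get $\dm_{\pg{A}}(X)\leq\predim(X)<n$ in one step. (Note that Lemma \ref{deltaGreaterThand} indeed only asks the \emph{ambient} geometry to be flat, so you do not need flatness of the restriction; and the paper already implicitly uses this same commutation when it deduces $\pg{A}^{\geo}\in\Cclq$ from that lemma.) Your argument dispenses with the induction and with any reference to the structure $A$ beyond flatness and $(n-1)$-purity of its geometry, so it actually proves the slightly more general statement that $\pg{G}^{\geo}$ is geometric for every flat $(n-1)$-pure geometry $\pg{G}$; what it does not give you is the paper's auxiliary inequality $\predim^{\pg{A}^{\geo}}(X)\geq\predim^{A}(X)$ tying the new structure to the old one, though that inequality is not reused elsewhere. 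Your uniqueness argument (two distinct closed sets of dimension $n-1$ meet in a closed set of dimension at most $n-2$, hence in at most $n-2$ points by purity) is also sound and matches the role played by $\Cclq_0$-membership in the paper.
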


\begin{proof}
By the above lemma, we have that $\pg{A}^{\geo}\in\Cclq$. Denote by $\predim^A$ the restriction of $\predim$ to substructures of $A$, and by $\predim^{\pg{A}^{\geo}}$ the restriction of $\predim$ to substructures of $\pg{A}^{\geo}$.

Let $X\subseteq \pg{A}^{\geo}$ with $|X|\geq n$ and $\predim^{\pg{A}^{\geo}}(X)<n$ such that $\predim^{\pg{A}^{\geo}}(X)$ is minimal. Note that by our choice of $X$, it must be that $\bigcup \maxcliques{X} = X$. We may assume $\maxcliques{X} \subseteq\maxcliques{\pg{A}^{\geo}}$, replacing each clique with the maximal clique containing it can only lower $\predim^{\pg{A}^{\geo}}(X)$. Enumerate $\maxcliques{X} = \set{\otn{K}{r}}$. We show inductively that for every $m\leq r$, for the set $X_m = \bigcup_{i=1}^m K_i$ we have $\predim^{\pg{A}^{\geo}}(X_m)\geq \predim^A(X_m)$. Assume this holds for $m$.

\medskip
\noindent Case 1: $|K_{m+1}\cap X_m|\geq n-1$, then $\predim^{\pg{A}^{\geo}}(X_{m+1}/X_m) = 0$. By submodularity,
\[
\predim^A(X_{m+1}/X_m) \leq \predim^A(K_{m+1}/K_{m+1}\cap X_m).
\]
We know $\predim^A(K_{m+1}\cap X_m)\geq n-1$ and $\predim^A(K_{m+1}) = n-1$, so $\predim^A(X_{m+1}/X_m) \leq 0$.

\medskip
\noindent Case 2: $|K_{m+1}\cap X_m| < n-1$. Then
\begin{align*}
\predim^{\pg{A}^{\geo}}(X_{m+1}/X_m) &= |K_{m+1}\setminus X_m| - \cardstar{K_{m+1}}
\\
&= |K_{m+1}| - |K_{m+1}\cap X_m| -(|K_{m+1}| - (n-1))
\\
&= (n-1) -|K_{m+1}\cap X_m|.
\end{align*}
On the other hand, by submodularity
\begin{align*}
\predim^A(X_{m+1}/X_m) &\leq \predim^A(K_{m+1}/K_{m+1}\cap X_m)
\\
&= \predim^A(K_{m+1}) - \predim^A(K_{m+1}\cap X_m)
\\
&= (n-1) - |K_{m+1}\cap X_m|.
\end{align*}

In any case, $\predim^{\pg{A}^{\geo}}(X_{m+1}/X_m)\geq \predim^A(X_{m+1}/X_m)$ and $\predim^{\pg{A}^{\geo}}(X_m)\geq \predim^A(X_m)$, so $\predim^{\pg{A}^{\geo}}(X_{m+1})\geq \predim^A(X_{m+1})$.

As $X=X_r$ we have $\predim^A(X) \leq \predim^{\pg{A}^{\geo}}(X)<n$. So $X\subseteq\cl_{\pg{A}}(X)\in \maxcliques{\pg{A}^{\geo}}$.
\end{proof}

The following lemma is an adaptation of \cite[Second Changing Lemma]{DavidMarcoOne}, with the same proof.

\begin{lemma}\label{changingLemma}
Let $A,D\in\Csym$ with $A\strong D$. Let $B\in\Cclq$ with $\geometry(B) = \geometry(A)$. Let $D'$ be the structure with the same universe as $D$, and $S^{D'} = (S^{D}\setminus S^{A})\cup S^{B}$. Then $B\strong D'$ and $\geometry(D') = \geometry(D)$.\qed
\end{lemma}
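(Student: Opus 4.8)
The plan is to verify the two assertions --- that $B\strong D'$ and that $\geometry(D')=\geometry(D)$ --- by a local computation, reducing everything to predimension comparisons over finite substructures and exploiting that $D$ is \emph{symmetric}, i.e. $\maxcliques{D}\subseteq [D]^n$. The crucial structural feature we shall use repeatedly is that in a structure from $\Csym$ every maximal clique has exactly $n$ elements, so that $\cardstar{K}=1$ for every $K\in\maxcliques{D}$; hence for a finite $X\subseteq D$ the quantity $s(X)$ simply counts the maximal $n$-cliques contained in $X$, and replacing $S^A$ by $S^B$ inside $D$ affects only those cliques living entirely inside the universe of $A$.

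First I would fix a finite substructure $X$ of $D'$ and analyse how $\predim^{D'}(X)$ relates to predimensions computed in $D$ and in $B$. Write $X_A = X\cap A$ (as a set of points). Because $S^{D'}$ agrees with $S^D$ off the universe of $A$, every maximal clique of $D'$ meeting $X$ either is a maximal clique of $D$ disjoint enough from $A$ (so unchanged), or is governed by $S^B$ and hence lies inside the universe of $A$; the analogous statement holds in $D$ with $B$ replaced by $A$. This yields an exact bookkeeping identity of the shape
\[
\predim^{D'}(X) \;=\; \predim^{D}(X) \;-\; \predim^{A}(X_A) \;+\; \predim^{B}(X_A),
\]
valid because the maximal cliques not contained in the universe of $A$ contribute identically on both sides and cancel. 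Granting this identity, both conclusions follow by the standard predimension calculus: to get $B\strong D'$, take any $B\subseteq X\subseteq D'$ (finite), note $X_A\supseteq B$, use $\predim^{D}(X)\geq \predim^{D}(X_A)\ge\predim^{A}(X_A)$ from $A\strong D$ together with $\predim^{B}(X_A)\geq \predim^{B}(B)$ from the fact that $B$ is self-strong in $\Cclq$ (every point is strong, and $\strong$ is transitive on $\Cclq_0$), and combine via the identity; and $\geometry(A)=\geometry(B)$ means $\dm_{\geometry(A)}=\dm_{\geometry(B)}$, so infima of $\predim$ over finite supersets agree, giving $\geometry(D')=\geometry(D)$ once one checks the infimum defining $\dm_{\geometry(D')}$ may be taken over $X$ with $X\cap A$ already $\cl_{\geometry(A)}$-closed.

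The routine part is the clique-bookkeeping that produces the displayed identity: one must be careful that when $X_A$ is enlarged to its ``clique-closure'' inside the universe of $A$ the extra points and extra cliques added match up between the $A$-world and the $B$-world --- this is exactly where $\geometry(A)=\geometry(B)$ (via Fact~\ref{symgeo is clqgeo} and Observation~\ref{geoOperatorIsIdempotentOnGeo}) is used, guaranteeing that the ``geometric closures'' of $(n-1)$-sets inside $A$ and inside $B$ have the same sizes even though the cliques themselves differ. \textbf{The main obstacle} is handling cliques of $D$ that straddle the universe of $A$ --- those $K\in\maxcliques{D}$ with $1\le |K\cap A|\le n-1$: one must argue that, because $A\strong D$ and $A\in\Csym$, such a $K$ meets $A$ in at most $n-1$ points and therefore its intersection with $A$ spans no clique of $A$, so that rewriting $S^A$ to $S^B$ neither destroys $K$ nor creates a competing clique through $K\cap A$; this keeps the off-$A$ contributions genuinely invariant and is what makes the cancellation in the identity legitimate. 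Once that case distinction is settled the rest is the same formal manipulation as in \cite[Second Changing Lemma]{DavidMarcoOne}.
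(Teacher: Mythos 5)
Your proposal is correct and is essentially the intended argument (the paper omits the proof, deferring to the Second Changing Lemma of \cite{DavidMarcoOne}): the identity $\predim^{D'}(X)=\predim^{D}(X)-\predim^{A}(X\cap A)+\predim^{B}(X\cap A)$ is the right pivot, and both conclusions follow from it by the predimension calculus you describe. The obstacle you flag does resolve as you predict: any clique of $D'$ of size greater than $n$ not contained in $A$ would contain an $(n+1)$-element subclique meeting the complement of $A$, which either has two points outside $A$ (so all $n+1$ of its $n$-subsets lie in $S^{D}$, contradicting $\set{a}\strong D$) or has a unique point $x\notin A$ lying on $n\geq 2$ maximal cliques of $D$ each meeting $A$ in $n-1$ points (contradicting $A\strong D$); hence the maximal cliques of $D'$ are exactly the maximal cliques of $B$ together with the $n$-sets of $S^{D}\setminus S^{A}$, which is what makes the cancellation legitimate.
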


\begin{lemma}
\label{mixedAmalgam}
Let $A\in\Cclq$ be such that $\pg{A}:=\geometry(A)$ is $(n-1)$-pure. Let $\widehat{B}\in \Cgeo$ be such that $\pg{A}^{\geo}\strong \widehat{B}$. Then there exists $B\in \Cclq$ such that $A\strong B$, $\geometry(B)$ is $(n-1)$-pure, and $(\geometry(B))^{\geo} = \widehat{B}$.
\end{lemma}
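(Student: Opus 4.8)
The plan is to pass to the symmetric construction, apply the Changing Lemma (Lemma~\ref{changingLemma}), and transfer back via Observation~\ref{geoOperatorIsIdempotentOnGeo}. Write $A_0$ for the universe of $A$. By Lemma~\ref{Cclq (n-1)-pure is Cgeo} we have $\pg{A}^{\geo}\in\Cgeo$, and by hypothesis $\pg{A}^{\geo}$ is the induced substructure of $\widehat{B}$ on $A_0$ with $\pg{A}^{\geo}\strong\widehat{B}$. We shall also use that $\geometry(\pg{A}^{\geo}) = \geometry(A)$; one inequality is Lemma~\ref{deltaGreaterThand}, and the other holds because, $\geometry(A)$ being the geometry of a structure in $\Cclq$, its nontrivial $(n-1)$-flats are closed, which makes the reductions and the flatness estimate in the proof of Lemma~\ref{deltaGreaterThand} tight on closed sets (compare also \cite{HassonMermelstein2017}). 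In particular, since $\pg{A}^{\geo}\strong\widehat{B}$, we get $\geometry(\widehat{B})\restrictedto A_0 = \geometry(\pg{A}^{\geo}) = \geometry(A)$.

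The heart of the argument is the construction of a compatible symmetrisation: I would produce $D\in\Csym$ together with its induced substructure $A^{\sym}:=D\restrictedto A_0\in\Csym$ so that $\geometry(D) = \geometry(\widehat{B})$, $\geometry(A^{\sym}) = \geometry(\pg{A}^{\geo})$, and $A^{\sym}\strong D$. This is the relative form of the ``moreover'' clause of Fact~\ref{symgeo is clqgeo}, obtained by re-running its cliquewise symmetrisation while tracking $A_0$: replace each $K\in\maxcliques{\widehat{B}}$ by a tree of $\cardstar{K}$ many $n$-cliques on the vertex set $K$ (no new points are introduced, and both $\geometry$ and the total predimension are unchanged), and when $|K\cap A_0|\geq n$ choose that tree so that its restriction to $A_0$ is still a spanning tree of the set $K\cap A_0$, which in that case is a maximal clique of $\pg{A}^{\geo}$. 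Maximal cliques of $\widehat{B}$ meeting $A_0$ in fewer than $n$ points contribute no $n$-clique inside $A_0$, so $D\restrictedto A_0$ is precisely a cliquewise symmetrisation of $\pg{A}^{\geo}$, whence $\geometry(A^{\sym}) = \geometry(\pg{A}^{\geo})$. For strength, the predimension bookkeeping gives $\predim(D\restrictedto A_0) = \predim(\pg{A}^{\geo}) = \dm_{\geometry(\widehat{B})}(A_0) = \dm_{\geometry(D)}(A_0)$, where the middle equality uses $\pg{A}^{\geo}\strong\widehat{B}$; this forces $A^{\sym}\strong D$. Carrying out this clique-by-clique symmetrisation with the right bookkeeping is the main obstacle, since Fact~\ref{symgeo is clqgeo} is stated only for a single structure.

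Finally, apply Lemma~\ref{changingLemma} with its ``$A$'' equal to $A^{\sym}$, its ``$D$'' equal to $D$, and its ``$B$'' equal to $A$: this is legitimate because $A^{\sym},D\in\Csym$ with $A^{\sym}\strong D$, because $A\in\Cclq$ with $\geometry(A) = \geometry(A^{\sym})$, and because $A^{\sym} = D\restrictedto A_0$, so that $S^{A^{\sym}} = S^{D}\cap A_0^{n}$. Let $B$ be the resulting structure, that is, the $\set{S}$-structure with the universe of $D$ and $S^{B} = (S^{D}\setminus S^{A^{\sym}})\cup S^{A}$; then $B\restrictedto A_0 = A$, and Lemma~\ref{changingLemma} yields $B\in\Cclq$, $A\strong B$, and $\geometry(B) = \geometry(D) = \geometry(\widehat{B})$. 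As $\widehat{B}\in\Cgeo$, Observation~\ref{geoOperatorIsIdempotentOnGeo} gives that $\geometry(\widehat{B})$ is $(n-1)$-pure and $(\geometry(\widehat{B}))^{\geo} = \widehat{B}$; since $\geometry(B) = \geometry(\widehat{B})$, the structure $B$ has the three desired properties.
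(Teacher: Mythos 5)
Your proposal is correct and follows essentially the same route as the paper: pass to $\Csym$ via Fact~\ref{symgeo is clqgeo}, replace the symmetrised copy of $A$ by $A$ itself using Lemma~\ref{changingLemma}, and recover the conclusion from Observation~\ref{geoOperatorIsIdempotentOnGeo}. The only difference is that you make explicit the compatibility issue the paper leaves implicit --- namely that the two symmetrisations must be chosen so that $A^{\sym}$ is an induced strong substructure of $B^{\sym}$ with $\geometry(A^{\sym})=\geometry(A)$ --- and your relative, cliquewise symmetrisation with the attendant predimension bookkeeping is a correct way to supply that detail.
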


\begin{proof}
By Fact \ref{symgeo is clqgeo}, let $A^{\sym}$ be such that $\geometry(A^{\sym})=\pg{A}$. Again by Fact \ref{symgeo is clqgeo}, as $\widehat{B}\in\Cclq$, let $B^{\sym}\in\Csym$ be such that $\geometry(B^{\sym}) = \geometry(\widehat{B})$. Let $B$ be the structure obtained from $B^{\sym}$ by replacing $A^{\sym}$ with $A$, as in Lemma \ref{changingLemma}. Then $A\strong B$, $B\in\Cclq$, and $\geometry(B) = \geometry(B^{\sym})=\geometry(\widehat{B})$. By Observation \ref{geoOperatorIsIdempotentOnGeo}, $\geometry(B)$ is $(n-1)$-pure and $(\geometry(B))^{\geo} = \widehat{B}$.
\end{proof}

Let $\mathbb{C}$ be the class of structures $A\in \Csym$ such that $\geometry(A)$ is $(n-1)$-pure. This is an amalgamation class closed under the standard amalgam (which is the free amalgam, in this case). Denote by $\mathbb{M}$ the generic structure of $\mathbb{C}$. For every $A\in\mathbb{C}$, define $\widehat{A} = (\geometry(A))^{\geo}$.

The following is a special case of Lemma 15 of \cite{Hns}.

\begin{lemma}
\label{predimension of closed is dimension}
Let $A\in\mathbb{C}$ and let $\pg{A}:=\geometry(A)$. Then $\predim(\widehat{A}) = \dm_{\pg{A}}(A)$.
\end{lemma}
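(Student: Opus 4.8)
The plan is to prove the two inequalities $\predim(\widehat{A})\ge\dm_{\pg{A}}(A)$ and $\predim(\widehat{A})\le\dm_{\pg{A}}(A)$ separately. First note that $\dm_{\pg{A}}(A)=\predim(A)$: since $A$ is finite, the only $Y$ with $A\subseteq Y\subseteq A$ is $A$ itself. Note also that $\widehat{A}=(\geometry(A))^{\geo}$ and $A$ share a universe. The inequality $\predim(\widehat{A})\ge\dm_{\pg{A}}(A)$ is then immediate: $\pg{A}=\geometry(A)$ is flat by construction and $(n-1)$-pure because $A\in\mathbb{C}$, so Lemma~\ref{deltaGreaterThand}, applied to the geometry $\pg{A}$ and its improper subgeometry $\pg{A}$, yields $\predim(\pg{A}^{\geo})\ge\dm_{\pg{A}}(A)$, i.e. $\predim(\widehat{A})\ge\dm_{\pg{A}}(A)$.

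For the reverse inequality, since the two structures share a universe it suffices to show $s(\widehat{A})\ge s(A)$. I would first record two facts. (i) By the definition of $(\geometry(A))^{\geo}$, every $E\in\maxcliques{\widehat{A}}$ has the form $\cl_{\pg{A}}(B)$ for some $B\in[A]^{n-1}$ with $\cl_{\pg{A}}(B)\neq B$; hence $E$ is a flat of $\pg{A}$ with $\dm_{\pg{A}}(E)=n-1$ (by $(n-1)$-purity) and $|E|\ge n$. (ii) Every $K\in\maxcliques{A}$ has $|K|=n$, since $A\in\Csym$; moreover $\predim(K)=n-1$ when computed in the induced substructure on $K$ (it is a single $n$-clique), so $\dm_{\pg{A}}(K)\le\predim(K)=n-1$, while $\dm_{\pg{A}}(K)\ge\dm_{\pg{A}}(B')=n-1$ for any $B'\in[K]^{n-1}$. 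Thus $\dm_{\pg{A}}(K)=n-1$, $\cl_{\pg{A}}(K)\in\maxcliques{\widehat{A}}$, and --- since two flats of $\pg{A}$ of equal dimension one of which contains the other must coincide --- $\cl_{\pg{A}}(K)$ is the \emph{unique} $E\in\maxcliques{\widehat{A}}$ with $K\subseteq E$.

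Now fix $E\in\maxcliques{\widehat{A}}$ and let $A_E$ be the induced substructure of $A$ on the set $E$. Its maximal cliques are precisely the $K\in\maxcliques{A}$ with $K\subseteq E$: a clique of $A_E$ is a clique of $A$ contained in $E$, and in $\Csym$ every clique of $A$ has exactly $n$ elements and is maximal. By the uniqueness in (ii), the family $\setcolon{\maxcliques{A_E}}{E\in\maxcliques{\widehat{A}}}$ therefore partitions $\maxcliques{A}$; since every clique occurring here has $n$ elements, $s(A)=|\maxcliques{A}|=\sum_E|\maxcliques{A_E}|=\sum_E(|E|-\predim(A_E))$. Now $(n-1)$-purity forces $\predim(A_E)\ge n-1$ --- any subset of $A$ of size at least $n-1$ has predimension at least $n-1$, for otherwise one of its $(n-1)$-subsets would be dependent --- so $|\maxcliques{A_E}|\le|E|-(n-1)=\cardstar{E}$, using $|E|\ge n$. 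Summing over $E$ gives $s(A)\le\sum_E\cardstar{E}=s(\widehat{A})$, and hence $\predim(\widehat{A})=|A|-s(\widehat{A})\le|A|-s(A)=\predim(A)=\dm_{\pg{A}}(A)$, completing the proof.

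I expect the main obstacle to be the bookkeeping in the reverse inequality --- in particular, checking that the maximal cliques of $A$ are partitioned by the rank-$(n-1)$ flats of $\pg{A}$, which uses both the existence and the uniqueness of the flat $\cl_{\pg{A}}(K)$ through a given $K\in\maxcliques{A}$, and then reading off $|\maxcliques{A_E}|\le\cardstar{E}$ from $(n-1)$-purity via the predimension of $A_E$. The appeal to Lemma~\ref{deltaGreaterThand} for the other inequality, and the identities $s(A)=|\maxcliques{A}|$ and $s(\widehat{A})=\sum_E\cardstar{E}$, are routine.
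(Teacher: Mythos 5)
Your proof is correct, but the route you take for the inequality $\predim(\widehat{A})\le\predim(A)$ is genuinely different from the paper's. The paper proves the equality in one computation: after reducing to the case $\widehat{A}=E_{\emptyset}$, it observes that each intersection $E_s$ of maximal cliques of $\widehat{A}$ is closed, hence strong in $A$, so $\dm_{\pg{A}}(E_s)=\predim(E_s)$; it then reuses the identity $\predim(\widehat{A})=\sum_{\emptyset\neq s}(-1)^{|s|+1}\dm_{\pg{A}}(E_s)$ established inside the proof of Lemma~\ref{deltaGreaterThand} and applies inclusion--exclusion to the $S$-tuples (via $\predim(E_s)=|E_s|-|S(E_s)|/n!$) to land directly on $\predim(A)$. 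You instead split into two inequalities: the lower bound is an immediate application of the \emph{statement} of Lemma~\ref{deltaGreaterThand} with $\pg{B}=\pg{A}$, and the upper bound is a direct counting argument --- the $n$-element cliques of $A\in\Csym$ are partitioned by the rank-$(n-1)$ flats $E$ they span (existence and uniqueness of $\cl_{\pg{A}}(K)$ both checked correctly), and each flat carries at most $|E|-(n-1)=\cardstar{E}$ of them because $(n-1)$-purity forces $\predim(A_E)\ge n-1$. Your version avoids inclusion--exclusion and the ``intersections of closed sets are strong'' step, at the cost of leaning on the $\Csym$ normalization that every clique has exactly $n$ elements; the paper's version is a single identity that also exhibits \emph{why} equality holds (flatness is tight on the cover by the $E_i$). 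Both are sound; the only nitpick is that calling the family $\setcolon{\maxcliques{A_E}}{E}$ a ``partition'' is slightly loose since some blocks may be empty, but the count $s(A)=\sum_E|\maxcliques{A_E}|$ you actually use is unaffected.
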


\begin{proof}
Enumerate $\maxcliques{\widehat{A}} = \set{\otn{E}{k}}$. Note that $S(A) = \bigcup_{i=1}^k S(E_i)$, because whenever $(\otn{a}{n})\in S(A)$, then $\cl(\set{\otn{x}{n}})\in\maxcliques{\widehat{A}}$. Then $\dm_{\pg{A}}(A) - \dm_{\pg{A}}(E_{\emptyset}) = |A\setminus E_{\emptyset}| = \predim(\widehat{A}) -\predim(E_{\emptyset})$, and so we may assume, similarly to as in the proof of \ref{deltaGreaterThand}, that $\widehat{A} = E_{\emptyset}$.

Observe that $E_s\strong A$ for any $s\neq \emptyset$, as an intersection of closed sets. Recall that in the proof of lemma \ref{deltaGreaterThand} we saw that
\[
\predim(\widehat{A}) = \sum_{\emptyset\neq s\subseteq \set{1,\dots,k}}(-1)^{|S|+1}\dm_{\pg{A}}(E_s).
\]
Then
\begin{align*}
\predim(\widehat{A}) &= \sum_{\emptyset\neq s\subseteq \set{1,\dots,k}} (-1)^{|s|+1}\dm_{\pg{A}}(E_s)
\\
&= \sum_{\emptyset\neq s\subseteq \set{1,\dots,k}} (-1)^{|s|+1}\predim(E_s)
\\
&= \sum_{\emptyset\neq s\subseteq \set{1,\dots,k}} (-1)^{|s|+1}|E_s| - \sum_{\emptyset\neq s\subseteq \set{1,\dots,k}} (-1)^{|s|+1}\frac{|S(E_s)|}{n!}
\\
&= |A| - \frac{|S(A)|}{n!} = \predim(A) = \dm_{\pg{A}}(A)
\end{align*}
where the equality between the third and fourth line is by the inclusion-exclusion principle.
\end{proof}

\begin{prop}
$\widehat{\mathbb{M}} \cong \Mgeo$
\end{prop}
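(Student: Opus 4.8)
The plan is to show that $\widehat{\genM} = (\geometry(\genM))^{\geo}$ is the generic structure of $\Cgeo$ by verifying the two Fra\"iss\'e conditions: that every finite substructure of $\widehat{\genM}$ lies in $\Cgeo$, and that $\widehat{\genM}$ has the strong extension property over $\Cgeo$. The first point is immediate from Lemma \ref{Cclq (n-1)-pure is Cgeo} together with the fact that the $\geo$-operator commutes with substructure (the observation that $\pg{B}\subseteq\pg{A}$ implies $\pg{B}^{\geo}\subseteq\pg{A}^{\geo}$): every finite substructure of $\widehat{\genM}$ is of the form $\pg{A}^{\geo}$ for some finite $A\strong\genM$ with $\geometry(A)$ being $(n-1)$-pure, hence lies in $\Cgeo$.

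The heart of the argument is the extension property. Suppose $\widehat{A}\strong\widehat{\genM}$ is finite and $\widehat{A}\strong\widehat{B}$ with $\widehat{B}\in\Cgeo$; I must find a strong embedding of $\widehat{B}$ into $\widehat{\genM}$ fixing $\widehat{A}$. First I would observe that the strong embeddings of $\widehat{\genM}$ correspond, via the $\geo$-operator and Observation \ref{geoOperatorIsIdempotentOnGeo}, to strong embeddings in $\mathbb{C}$: one needs that $\widehat{A}\strong\widehat{\genM}$ (predimension computed in $\Cgeo$) is equivalent to $A\strong\genM$ (predimension computed in $\Csym$) for the corresponding $A\in\mathbb{C}$. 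This equivalence should follow from Lemma \ref{predimension of closed is dimension}: for any $C$ with $A\subseteq C\subseteq\genM$ one has $\predim(\widehat{C}/\widehat{A}) = \dm_{\geometry(C)}(C) - \dm_{\geometry(A)}(A)$-type identities, relating $\predim$ on the $\Cgeo$ side to $\dm$ on the geometry, which in turn is controlled by $\predim$ on the $\Csym$ side. Granting this, take the $A\in\mathbb{C}$ with $\widehat{A}=(\geometry(A))^{\geo}$; then $A\strong\genM$. Now apply Lemma \ref{mixedAmalgam} with this $A$ and with $\widehat{B}$ in the role of $\widehat{B}$: since $\pg{A}^{\geo}=\widehat{A}\strong\widehat{B}$, we obtain $B\in\Cclq$ with $A\strong B$, $\geometry(B)$ being $(n-1)$-pure, and $(\geometry(B))^{\geo}=\widehat{B}$. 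One then needs $B$ (or rather its $\sym$-incarnation from Fact \ref{symgeo is clqgeo}) to lie in $\mathbb{C}$, which it does since $\geometry(B)$ is $(n-1)$-pure. By genericity of $\genM$ over $\mathbb{C}$, embed this $\sym$-version of $B$ strongly into $\genM$ over $A$; applying $\geometry(-)^{\geo}$ and using that this operator sends strong embeddings to strong embeddings and is compatible with the identifications above yields the desired strong embedding of $\widehat{B}$ into $\widehat{\genM}$ over $\widehat{A}$.

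Finally I would check condition (1) of genericity in the converse direction and confirm uniqueness: $\widehat{\genM}$ is countable, every finite substructure is in $\Cgeo$, and it has the extension property, so by the uniqueness part of Fra\"iss\'e's theorem $\widehat{\genM}\cong\Mgeo$.

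The main obstacle I anticipate is the bookkeeping in the correspondence ``$A\strong\genM$ iff $\widehat{A}\strong\widehat{\genM}$'' and, more generally, that the map $A\mapsto\widehat{A}=(\geometry(A))^{\geo}$ is a well-behaved functor between $(\mathbb{C},\strong)$ and $(\Cgeo,\strong)$ with quasi-inverse given by Lemma \ref{mixedAmalgam}. Lemmas \ref{deltaGreaterThand}, \ref{Cclq (n-1)-pure is Cgeo} and \ref{predimension of closed is dimension} are exactly the ingredients that make $\predim$ on the $\Cgeo$ side agree with $\dm_{\geometry}$, but one must be careful that relative predimensions $\predim(\widehat{B}/\widehat{A})$ (an infimum over finite intermediate sets) match $\dm_{\geometry(B)}(B/A)$; handling the infimum and the fact that closures are taken inside different ambient structures is where the delicate part lies.
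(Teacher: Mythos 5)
Your proposal is correct and follows essentially the same route as the paper: the paper invokes the transfer method of [Mermelstein2016, 3.6.7] and verifies exactly the three ingredients you identify --- heredity of $A\mapsto\widehat{A}$ via Lemma \ref{Cclq (n-1)-pure is Cgeo}, monotonicity via $A\strong B\Rightarrow\geometry(A)\subseteq\geometry(B)$, and lifting of strong extensions via Lemma \ref{mixedAmalgam} --- delegating to that citation the Fra\"iss\'e bookkeeping you spell out by hand (in particular the implication $A\strong\genM\Rightarrow\widehat{A}\strong\widehat{\genM}$, which the paper records separately as Lemma \ref{strongInMstrongInMgeo} by citing [Mermelstein2016, Lemma 3.6.5], and which your sketch via Lemmas \ref{deltaGreaterThand} and \ref{predimension of closed is dimension} does recover). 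The only point to tidy in your version is that richness must be verified at arbitrary finite strong substructures of $\widehat{\genM}$, not only those of the form $\widehat{A}$ with $A\strong\genM$; this follows from cofinality of the latter together with amalgamation in $\Cgeo$.
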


\begin{proof}
By generalizing the method of \cite[3.6.7]{Mermelstein2016}, we need only show that:
\begin{itemize}
\item
If $A\in \mathbb{C}$, then $\widehat{A}\in \Cgeo$.
\item
If $A\strong B\in\mathbb{C}$, then $\widehat{A}\subseteq\widehat{B}$.
\item
If $A\in \mathbb{C}$, $\widehat{B}\in\Cgeo$ such that $\widehat{A}\strong\widehat{B}$, then there exists $A\strong C\in\mathbb{C}$ such that $\widehat{B}\strong\widehat{C}$.
\end{itemize}

The first point is Lemma \ref{Cclq (n-1)-pure is Cgeo}, the second point is because $A\strong B$ implies $\geometry(A)\subseteq\geometry(B)$, the third point follows from Lemma \ref{mixedAmalgam}.
\end{proof}

\begin{lemma}\label{strongInMstrongInMgeo}
Whenever $A\strong \mathbb{M}$, then $\widehat{A}\strong\widehat{M}$. Additionally, $\predim(A) = \predim(\widehat{A})$.
\end{lemma}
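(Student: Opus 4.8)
The plan is to reduce both assertions to Lemma \ref{deltaGreaterThand}, Lemma \ref{predimension of closed is dimension}, and the behaviour of $\geometry(-)$ and $(-)^{\geo}$ under restriction. We first treat the case of finite $A$; the general case follows from it by a standard directed-union argument, using transitivity of $\strong$ and the fact that $\pg{B}\subseteq\pg{A}$ implies $\pg{B}^{\geo}\subseteq\pg{A}^{\geo}$.

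Since $A\strong\mathbb{M}$, the structure $A$ is a finite substructure of $\mathbb{M}$, hence $A\in\mathbb{C}$. Put $\pg{A}=\geometry(A)$ and $\pg{G}=\geometry(\mathbb{M})$; the latter is flat and $(n-1)$-pure (the purity being inherited from the finite members of $\mathbb{C}$). The first, routine, step is to verify, from submodularity of $\predim$ together with $A\strong\mathbb{M}$, that $\dm_{\pg{G}}(X)=\dm_{\pg{A}}(X)$ for every finite $X\subseteq A$: the inequality $\dm_{\pg{G}}(X)\le\dm_{\pg{A}}(X)$ is immediate from the definitions, and for the reverse one replaces a witness $X\subseteq Y\subseteq\mathbb{M}$ by $Y\cap A$, noting $\predim(Y)\ge\predim(Y\cap A)$. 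In particular $\pg{A}=\pg{G}\restrictedto A$, and $\dm_{\pg{G}}(A)=\predim(A)$ directly from $A\strong\mathbb{M}$. Since $A\in\mathbb{C}$, Lemma \ref{predimension of closed is dimension} then gives $\predim(\widehat{A})=\dm_{\pg{A}}(A)=\predim(A)$, which is the second assertion of the lemma.

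For $\widehat{A}\strong\widehat{\mathbb{M}}$, recall that $\widehat{\mathbb{M}}=\pg{G}^{\geo}$ has the same universe as $\mathbb{M}$, and $\widehat{A}=\pg{A}^{\geo}$ the same universe as $A$. The key point is that for any $Z$ with $A\subseteq Z\subseteq\mathbb{M}$, the $\set{S}$-structure that $\widehat{\mathbb{M}}$ induces on $Z$ is exactly $(\pg{G}\restrictedto Z)^{\geo}$. Indeed, $S^{\widehat{\mathbb{M}}}$ consists of the tuples with distinct entries whose underlying set lies inside some $\cl_{\pg{G}}(B)$ with $B\in[\mathbb{M}]^{n-1}$; any such set has $\pg{G}$-dimension at most $n-1$, hence is dependent in $\pg{G}$, and conversely any dependent $n$-set $\set{a_1,\dots,a_n}\subseteq Z$ satisfies $a_n\in\cl_{\pg{G}}(\set{a_1,\dots,a_{n-1}})$ by $(n-1)$-purity. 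Thus, for a tuple from $Z$, membership in $S^{\widehat{\mathbb{M}}}$ is equivalent to its underlying set being dependent in $\pg{G}$, equivalently in $\pg{G}\restrictedto Z$, which (using $\cl_{\pg{G}\restrictedto Z}(B)=\cl_{\pg{G}}(B)\cap Z$) is precisely membership in $S^{(\pg{G}\restrictedto Z)^{\geo}}$. Taking $Z=A$ confirms, via $\pg{A}=\pg{G}\restrictedto A$, that $\widehat{A}$ is an induced substructure of $\widehat{\mathbb{M}}$.

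It remains to fix an arbitrary finite $Z$ with $\widehat{A}\subseteq Z\subseteq\widehat{\mathbb{M}}$, i.e. $A\subseteq Z\subseteq\mathbb{M}$, and to bound $\predim$ of the induced structure from below by $\predim(\widehat{A})$. By the previous paragraph this induced structure is $(\pg{G}\restrictedto Z)^{\geo}$, so Lemma \ref{deltaGreaterThand}, applied with the flat $(n-1)$-pure geometry $\pg{G}$ and its subgeometry $\pg{G}\restrictedto Z$, gives
\[
\predim\big((\pg{G}\restrictedto Z)^{\geo}\big)\ \ge\ \dm_{\pg{G}}(Z)\ \ge\ \dm_{\pg{G}}(A)\ =\ \predim(A)\ =\ \predim(\widehat{A}),
\]
where the middle inequality is monotonicity of $\dm_{\pg{G}}$ and the equalities come from the earlier paragraphs. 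Hence $\widehat{A}\strong\widehat{\mathbb{M}}$. I expect the only genuinely nontrivial step to be the identification of the structure induced on $Z$ with $(\pg{G}\restrictedto Z)^{\geo}$ — that is, checking that passing to the restriction neither creates new maximal cliques nor destroys old ones — everything else being bookkeeping with submodularity and the two cited lemmas.
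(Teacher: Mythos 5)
Your proof is correct, but for the first assertion it takes a genuinely different route from the paper. The paper disposes of $\widehat{A}\strong\widehat{\mathbb{M}}$ in one line by citing an external result (Lemma 3.6.5 of \cite{Mermelstein2016}), and only argues the ``additional part'' explicitly --- for that part your argument ($\predim(\widehat{A})=\dm_{\pg{A}}(A)=\dm_{\geometry(\mathbb{M})}(A)=\predim(A)$ via Lemma \ref{predimension of closed is dimension} and $A\strong\mathbb{M}$) coincides with the paper's, just with the supporting submodularity computations spelled out. Your direct proof of the first assertion --- identify the substructure of $\widehat{\mathbb{M}}$ induced on a finite $Z\supseteq A$ with $(\geometry(\mathbb{M})\restrictedto Z)^{\geo}$ using $(n-1)$-purity, then apply Lemma \ref{deltaGreaterThand} to get $\predim(Z)\geq\dm_{\geometry(\mathbb{M})}(Z)\geq\dm_{\geometry(\mathbb{M})}(A)=\predim(\widehat{A})$ --- is sound, and in fact it is the same computation the paper later performs inside the proof of its final theorem (the chain $\dm_{\geometry(\mathbb{M})}(D)\leq\predim(\widehat{D})$), so your version buys self-containment at no real cost and is arguably preferable for a reader without access to \cite{Mermelstein2016}. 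Two small points you should make explicit: (i) the definition of $\strong$ quantifies over \emph{all} intermediate sets $X$, not only finite supersets of $\widehat{A}$, so you need one more application of submodularity to pass from $\predim(Z)\geq\predim(\widehat{A})$ for finite $Z\supseteq\widehat{A}$ to $\predim(Y/Y\cap\widehat{A})\geq 0$ for arbitrary finite $Y$; and (ii) Lemma \ref{deltaGreaterThand} is being applied with $\pg{B}=\geometry(\mathbb{M})$ infinite, which its proof supports but its statement does not advertise --- the paper incurs the same debt in its last theorem, so this is a feature of the source rather than a flaw in your argument.
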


\begin{proof}
For the first part, apply \cite[Lemma 3.6.5]{Mermelstein2016}. For the additional part, observe that $\geometry(A)\subseteq \geometry(\widehat{M})$. Then by Lemma \ref{predimension of closed is dimension}, we have
\begin{align*}
\predim(\widehat{A}) &= \dm_{\geometry(A)}(A)
\\
&= \dm_{\geometry(\mathbb{M})}(A)
\\
&= \predim(A).
\end{align*}
\end{proof}

Note that the following theorem is about \emph{equality} of pregeometries, and not mere isomorphism.
\begin{theorem}
Identifying $\widehat{\mathbb{M}}$ with $\Mgeo$, we have $\geometry(\mathbb{M})=\geometry(\Mgeo)$.
\end{theorem}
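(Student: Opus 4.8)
The plan is to prove that the dimension functions of $\geometry(\mathbb{M})$ and $\geometry(\Mgeo)$ agree on every finite subset of their common universe; since a flat geometry is determined by its dimension function, this gives the asserted equality. I would first record two facts. First, $\geometry(\mathbb{M})$ is flat (it arises from the clique predimension) and $(n-1)$-pure: every finite $A\strong\mathbb{M}$ lies in $\mathbb{C}$ and $\dm_{\geometry(A)}$ agrees with $\dm_{\geometry(\mathbb{M})}$ on subsets of $A$ (as in the proof of Lemma~\ref{strongInMstrongInMgeo}), so $(n-1)$-purity is inherited from $\mathbb{C}$, while a single clique of size $n$ is a member of $\mathbb{C}$ that embeds strongly into $\mathbb{M}$ and witnesses a dependent $n$-tuple. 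Second, for any flat $(n-1)$-pure geometry $\pg{G}$ and finite subset $W$ of its universe, $\pg{G}^{\geo}\restrictedto W=(\pg{G}\restrictedto W)^{\geo}$: both structures put an $n$-set $\bar a\subseteq W$ into a maximal clique precisely when $\bar a\subseteq\cl_{\pg{G}}(B)$ for some $(n-1)$-subset $B$ of $\bar a$, the one nonobvious inclusion using $(n-1)$-purity to upgrade a witness $B\in[G]^{n-1}$ with $\bar a\subseteq\cl_{\pg{G}}(B)$ to $\cl_{\pg{G}}(B)=\cl_{\pg{G}}(B')$ for any $(n-1)$-subset $B'$ of $\bar a$ (a closed set of dimension $n-1$ strictly containing an $(n-1)$-set is the closure of each of its $(n-1)$-subsets).

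For the inequality $\dm_{\geometry(\Mgeo)}(X)\le\dm_{\geometry(\mathbb{M})}(X)$ at a finite $X$, I would take $A$ to be the strong closure of $X$ in $\mathbb{M}$, so that $A$ is finite, $X\subseteq A\strong\mathbb{M}$, and $\predim(A)=\dm_{\geometry(\mathbb{M})}(X)$. Using $\geometry(A)=\geometry(\mathbb{M})\restrictedto A$ and monotonicity of $(\cdot)^{\geo}$, the structure $\widehat A=(\geometry(A))^{\geo}$ is the induced substructure $\Mgeo\restrictedto A$ of $\Mgeo=(\geometry(\mathbb{M}))^{\geo}$, and Lemma~\ref{strongInMstrongInMgeo} gives $\widehat A\strong\Mgeo$ with $\predim(\widehat A)=\predim(A)$. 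Then $\dm_{\geometry(\Mgeo)}(X)\le\predim(\Mgeo\restrictedto A)=\predim(\widehat A)=\predim(A)=\dm_{\geometry(\mathbb{M})}(X)$.

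For the reverse inequality, I would take an arbitrary finite $W$ with $X\subseteq W\subseteq\Mgeo$ and use the second fact to rewrite $\Mgeo\restrictedto W=(\geometry(\mathbb{M}))^{\geo}\restrictedto W$ as $(\geometry(\mathbb{M})\restrictedto W)^{\geo}$. Lemma~\ref{deltaGreaterThand}, applied to the flat $(n-1)$-pure geometry $\geometry(\mathbb{M})$ and its restriction to $W$, then yields $\predim(\Mgeo\restrictedto W)\ge\dm_{\geometry(\mathbb{M})}(W)\ge\dm_{\geometry(\mathbb{M})}(X)$; taking the infimum over $W$ gives $\dm_{\geometry(\mathbb{M})}(X)\le\dm_{\geometry(\Mgeo)}(X)$, and the two inequalities complete the proof.

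The step I expect to be the main obstacle is the second fact — that forming induced substructures commutes with the operator $(\cdot)^{\geo}$ on a flat $(n-1)$-pure geometry — because this is exactly what permits a local application of Lemma~\ref{deltaGreaterThand} inside $\widehat{\mathbb{M}}$, and it is where $(n-1)$-purity is genuinely used: without it, the induced substructure of $\pg{G}^{\geo}$ on $W$ could carry strictly more edges than $(\pg{G}\restrictedto W)^{\geo}$, breaking the inequality. The remaining ingredients are routine manipulations with strong closures together with the already-established facts that $\geometry$ behaves well under strong inclusions and that $\widehat{(\cdot)}$ preserves both strength and predimension.
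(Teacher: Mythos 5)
Your proof is correct and follows essentially the same route as the paper: one inequality via the self-sufficient closure in $\mathbb{M}$ together with Lemma~\ref{strongInMstrongInMgeo}, and the other via Lemma~\ref{deltaGreaterThand} applied to restrictions of $\geometry(\mathbb{M})$. The only differences are cosmetic --- you take an infimum over all finite supersets where the paper specializes to the self-sufficient closure in $\Mgeo$, and you spell out the commutation of $(\cdot)^{\geo}$ with induced substructures (via $(n-1)$-purity), a step the paper uses implicitly when it identifies $\widehat{D}$ with the substructure of $\Mgeo$ induced on $D$.
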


\begin{proof}
Let $A\subseteq\mathbb{M}$ be finite.

Let $B\strong \mathbb{M}$ be the self sufficient closure of $A$ in $\mathbb{M}$. Then
\[
\dm_{\geometry(\mathbb{M})}(A)= \predim(B) = \predim(\widehat{B})\geq \dm_{\geometry(\Mgeo)}(A).
\]
Let $\widehat{D}\strong \Mgeo$ be the self sufficient closure of $\widehat{A}$ in $\Mgeo$. Let $D\subseteq\mathbb{M}$ be the structure induced by $\mathbb{M}$ on the underlying set of $\widehat{D}$.
By Lemma \ref{deltaGreaterThand} we have 
\[
\dm_{\geometry(\mathbb{M})}(A)\leq \dm_{\geometry(\mathbb{M})}(D)\leq \predim(\widehat{D})=\dm_{\geometry(\Mgeo)}(A).
\]
\end{proof}

The above theorem shows that $\Mgeo$ is infact the reduct of $\geometry(\mathbb{M})=\geometry(\Mgeo)$ to $D_n$. So clearly, the structure of $\Mgeo$ can be read off the geometry of $\Mgeo$ and vice versa.

\bibliographystyle{alpha}
\bibliography{../myrefs}
\end{document}